\newtheorem{thm}{Theorem}[section]
\newtheorem{prop}[thm]{Proposition}
\newtheorem{lem}[thm]{Lemma}
\theoremstyle{definition}
\newtheorem{defn}[thm]{Definition}
\newcounter{labelflag} \setcounter{labelflag}{0}
\newcommand{\Label}[1]{
                       \ifnum\thelabelflag=1
                          \ifmmode
                             \makebox[0in][l]{\qquad\fbox{\rm#1}}
                          \else
                             \marginpar{\vspace{0.7\baselineskip}
                                        \hspace{-1.1\textwidth}
                                        \fbox{\rm#1}}
                          \fi
                       \fi
                       \label{#1} }
\newcommand{\be}{\begin{equation}}
\newcommand{\ee}{\end{equation}}
\newcommand{\R}{\mathbb{R}}
\newcommand{\N}{\mathbb{N}}
\def \calf {{  {\mathcal{F}} }}
\def \calftwo {{  {\mathcal{F}}_2  }}
\def \cala {{  {\mathcal{A}}  }}
\def \calb {{  {\mathcal{B}}  }}
\def \cald {{  {\mathcal{D}}  }}
\def \caln {{  {\mathcal{N}}  }}
\def \tomega { {\tilde{\omega} }}
\def \tilv { { \tilde{v} }}
\def \thonet {{  \theta_{1,t}  }}
\def \thtwot {{  \theta_{2,t}  }}
\begin{document}

\baselineskip=1.5\baselineskip

\begin{titlepage}
\title{  \bf
Periodic Random  Attractors for Stochastic  Navier-Stokes Equations
on Unbounded Domains }
\vspace{10mm}

\author{
Bixiang Wang 
\vspace{5mm}\\
Department of Mathematics\\
 New Mexico Institute of Mining and
Technology  \\ Socorro,  NM~87801, USA  \vspace{3mm}\\
Email: bwang@nmt.edu }
\date{}

\end{titlepage}

\maketitle

\medskip

\begin{abstract}
This paper is concerned with the asymptotic behavior of solutions
of the two-dimensional Navier-Stokes equations 
with both non-autonomous deterministic and
stochastic terms defined on unbounded domains.
We first introduce a continuous cocycle   for the equations  
and then prove   the existence
and uniqueness of tempered  random  attractors.
We also characterize the structures of the random  attractors by complete
solutions.  When deterministic forcing terms are periodic, we show
that the tempered  random attractors  are also periodic.
Since the Sobolev embeddings on unbounded domains  are not compact,
we establish the pullback asymptotic compactness of solutions  by
Ball\rq{}s idea of  energy equations.
\end{abstract}

{\bf Key words.}       Random attractor;  
stochastic  Navier-Stokes equation;  
  unbounded domain; \\  complete solution.

 {\bf MSC 2000.} Primary 35B40. Secondary 35B41, 37L30.

\section{Introduction}
\setcounter{equation}{0}
 
 In this paper,    we investigate   the 
 pullback   attractors  for the 
 two-dimensional Navier-Stokes equations on unbounded domains
 with  non-autonomous deterministic  as well   as 
stochastic terms. 
Let $Q$ be an unbounded open set in $\R^2$ with boundary
$\partial Q$.
 Given   $\tau \in\R$,   consider      the stochastic 
 Navier-Stokes equations with multiplicative noise:
 \be
 \label{intro1}
 {\frac {\partial u}{\partial t}}
 -\nu \Delta u  + (u \cdot \nabla) u
 = f(x,t) -\nabla p
 +  \alpha u \circ {\frac {d w}{dt}}, \quad x \in Q \ \ \mbox{ and } \ t >\tau, 
 \ee
 \be\label{intro2}
 {\rm  div }\  u =0,
  \quad x \in Q \ \ \mbox{ and } \ t >\tau, 
 \ee
 together with  homogeneous  Dirichlet boundary condition, 
 where $\nu, \ \alpha \in \R$
 with  $\nu>0$,  $f$ is a 
 given function defined on $Q \times \R$, 
 and $w$  
 is  a  two-sided real valued Wiener process 
 defined in a 
 probability space. 
 The  stochastic  equation \eqref{intro1}  is 
 understood in the sense of Stratonovich  integration. 

The attractors of the Navier-Stokes equations have been extensively
studied  in the literature, see, e.g., 
\cite{bab1, bal1, car3, car4, hal1, ros1, sel1, tem1}
for deterministic   equations
and  \cite{ cra2, fla1, schm1}
for stochastic  equations.  
Particularly, 
in the deterministic case (i.e.,  $\alpha =0$),   the autonomous global  attractors
and 
the non-autonomous pullback attractors
of     \eqref{intro1}-\eqref{intro2} on {\it unbounded}  domains
have been studied in \cite{ros1} and \cite{car3, car4}, respectively.
For the  stochastic equations with additive noise  
and  time-independent  $f$,  the asymptotic compactness
of solutions  on {\it unbounded}
  domains has been investigated in \cite{brz1}.
As far as the author is aware,  there is no  result available in the literature
on the existence of random attractors for the stochastic equations
\eqref{intro1}-\eqref{intro2}   with   {\it  time-dependent}  $f$ 
even on bounded domains.
The purpose of the present 
 paper is  to  investigate  this problem 
 and examine the periodicity of random
attractors  when $f$ is periodic in time.

 It is  worth mentioning  that 
 the concept of pullback attractors 
  for random  systems 
  with time-independent $f$
  was introduced  in \cite{cra2, fla1, schm1}  and
  the existence of such attractors for 
  compact systems was proved  in  
     \cite{arn1,  car2, chu2, cra1, cra2, fla1, huang1, 
    kloe1, schm1} and the references therein.
    For non-compact systems,  the existence of pullback attractors
    was established in \cite{bat1, bat2,  wan2, wan3}.
    In the present paper,    we study pullback   attractors
    for the stochastic equations \eqref{intro1}-\eqref{intro2}
    on unbounded domains with time-dependent $f$.
    In this case,  the random dynamical  systems
    associated with the equations are non-compact.

To deal with  the  stochastic equations with 
 non-autonomous $f$,   we need
to combine  the  ideas of non-autonomous deterministic 
dynamical systems  and  that of random dynamical systems.
Particularly,   the concept of dynamical systems defined  over 
two parametric spaces, say $\Omega_1$ and $\Omega_2$,   is needed,
where $\Omega_1$ is 
a nonempty set
used to deal with the non-autonomous deterministic terms,
and $\Omega_2$ is  a probability space  responsible for the stochastic terms.
The existence and uniqueness of random attractors 
for dynamical systems over two parametric spaces have
been recently established in \cite{wan4}.
For the stochastic Navier-Stokes equations
\eqref{intro1}-\eqref{intro2}, we may take
$\Omega_1$   as the set of all    translations of $f$.
We   can also take  $\Omega_1$ as  the collection
of all initial times, i.e., $\Omega_1 =\R$. 
In this paper, we will   choose  $\Omega_1 =\R$.
We  first  define  a  continuous 
cocycle   for \eqref{intro1}-\eqref{intro2}
over $\Omega_1$   and $\Omega_2$, and then
prove the existence of tempered random absorbing sets.
Since the Sobolev embeddings on unbounded domains are no 
longer compact, we have to appeal   to the idea of energy equations
to establish   the pullback asymptotic compactness of solutions.
This  method    was introduced by
Ball in \cite{bal1} for deterministic equations, and used by the authors
in \cite{car3, car4, ros1}    for  the deterministic Navier-Stokes equations
on unbounded domains and in \cite{brz1}   for  the
stochastic equations with time-independent $f$. 
We will adapt this approach to the stochastic equations
\eqref{intro1}-\eqref{intro2}   with time-dependent $f$,
and prove the existence of tempered random attractors 
 for the equations.
We also consider
the random attractors  in   the case where $f$ is a periodic function in time.
If $f$ is periodic, we will show   that the  tempered random attractors
are also periodic in  some sense.
Following \cite{wan4}, the structures of the
 tempered random attractors  will  be characterized by
  the tempered complete solutions.

 In the next   section,  we will recall some results
 on pullback attractors  for random dynamical 
 systems over two parametric spaces.  A continuous
 cocycle for the stochastic Navier-Stokes equations
 \eqref{intro1}-\eqref{intro2} with non-autonomous $f$
 is defined in Section  3.  We   then
  derive uniform estimates
 of the  solutions in Section 4 and prove the existence
 and uniqueness  of pullback
 attractors in Section 5.

In    the  sequel,    we  will use  
$\| \cdot \|$ and $(\cdot, \cdot)$ to denote  the norm and the inner product
of $L^2(Q)$,  respectively.   The
norm of a       Banach space $X$  is generally  written as    $\|\cdot\|_{X}$.
   The letters $c$ and $c_i$ ($i=1, 2, \ldots$)
are  used  to denote   positive constants 
whose  values are not significant in the context.
   
\section{Theory of Pullback Attractors}
\setcounter{equation}{0}

In this section,  we recall some   results 
on pullback   attractors    for random dynamical
systems  with two parametric spaces   as presented
in \cite{wan4}. This sort of dynamical systems can be generated
by differential  equations  with both deterministic
and stochastic  non-autonomous
external terms. 
All results given in this section are not original and they
are presented here   just   for the reader's   convenience.
We also  refer   the reader to
  \cite{bat1, cra1, cra2, fla1, schm1} for the theory of pullback
attractors  for random dynamical  systems with one parametric
space.

Let $\Omega_1$ be a nonempty  set
and $\{\thonet\}_{t \in \R}$  be a family of
mappings   from $\Omega_1$
into itself   such that
 $\theta_{1, 0}  $ is the
identity on $\Omega_1$
and $\theta_{1,  s+t}  = \theta_{1,t,}
  \circ \theta_{1,s}  $ for all
$t, s \in \R$. 
Let $(\Omega_2, \calftwo, P)$
be a probability space and 
 $\theta_2 : \R \times \Omega_2 \to \Omega_2$
 be  a    $(\calb (\R) \times \calftwo, \calftwo)$-measurable mapping
 such that $\theta_2(0,\cdot) $ is the
identity on $\Omega_2$, $\theta_2 (s+t,\cdot) = \theta_2 (t,\cdot) \circ \theta_2 (s,\cdot)$ for all
$t, s \in \R$
and $P \theta_2 (t,\cdot)  =P$
for all $t \in \R$.
We usually  write 
$\theta_2 (t, \cdot)$ as $\thtwot$
and   call  both
$(\Omega_1, \{\thonet\}_{t \in \R})$ and 
$(\Omega_2, \calftwo, P,  \{\thtwot\}_{t \in \R})$
a parametric   dynamical system.

Let  $(X, d)$   be  a complete
separable  metric space with  Borel $\sigma$-algebra $\calb (X)$.
Given $r>0$     and $D \subseteq  X$,
the neighborhood 
of $D$  with radius   $r$ is written  as 
$\caln_r(D)$. 
 Denote by $2^X$ 
  the collection of all subsets of $X$. A set-valued mapping
$K: \Omega_1 \times \Omega_2 \to  2^X$   is called measurable
with respect to $\calftwo$
in $\Omega_2$
if  the value  $K(\omega_1, \omega_2)$
is a closed  nonempty subset  of $X$
for all $\omega_1 \in \Omega_1$ and $\omega_2 \in \Omega_2$,
 and  the mapping
$ \omega_2 \in  \Omega_2
 \to d(x, K(\omega_1, \omega_2) )$
is $(  \calftwo, \ \calb(\R) )$-measurable
for every  fixed $x \in X$ and $\omega_1 \in \Omega_1$.
If $K$ is  measurable  with respect to $\calftwo$
in $\Omega_2$,   then we   say    that      the family 
$\{K(\omega_1, \omega_2): \omega_1 \in \Omega_1, \omega_2 \in \Omega_2 \}$
  is measurable
with respect to $\calftwo$
 in $\Omega_2$.
 We now   define a cocycle on $X$ over two parametric spaces.

\begin{defn} \label{ds1}
 Let
$(\Omega_1,  \{\thonet\}_{t \in \R})$
and
$(\Omega_2, \calftwo, P,  \{\thtwot\}_{t \in \R})$
be parametric  dynamical systems.
A mapping $\Phi$: $ \R^+ \times \Omega_1 \times \Omega_2 \times X
\to X$ is called a continuous  cocycle on $X$
over $(\Omega_1,  \{\thonet\}_{t \in \R})$
and
$(\Omega_2, \calftwo, P,  \{\thtwot\}_{t \in \R})$
if   for all
  $\omega_1\in \Omega_1$,
  $\omega_2 \in   \Omega_2 $
  and    $t, \tau \in \R^+$,  the following conditions (i)-(iv)  are satisfied:
\begin{itemize}
\item [(i)]   $\Phi (\cdot, \omega_1, \cdot, \cdot): \R ^+ \times \Omega_2 \times X
\to X$ is
 $(\calb (\R^+)   \times \calftwo \times \calb (X), \
\calb(X))$-measurable;

\item[(ii)]    $\Phi(0, \omega_1, \omega_2, \cdot) $ is the identity on $X$;

\item[(iii)]    $\Phi(t+\tau, \omega_1, \omega_2, \cdot) = \Phi(t, \theta_{1,\tau} \omega_1,  \theta_{2,\tau} \omega_2, \cdot) \circ \Phi(\tau, \omega_1, \omega_2, \cdot)$;

\item[(iv)]    $\Phi(t, \omega_1, \omega_2,  \cdot): X \to  X$ is continuous.
    \end{itemize}
    
    If,  in addition,  there exists  a
    positive number   $T $ such that
    for every $t\ge 0$, $\omega_1 \in \Omega_1$  and $\omega_2 \in \Omega_2$,
$$
\Phi(t, \theta_{1, T} \omega_1, \omega_2, \cdot)
= \Phi(t, \omega_1,  \omega_2, \cdot ),
$$
then $\Phi$ is called  
a  continuous periodic  cocycle  on $X$ with period $T$.
\end{defn}

In the sequel, we use  $\cald$ to denote
 a  collection  of  some families of  nonempty subsets of $X$:
\be
\label{defcald}
{\cald} = \{ D =\{ D(\omega_1, \omega_2 ) \subseteq X: \ 
D(\omega_1, \omega_2 ) \neq \emptyset,  \ 
  \omega_1 \in \Omega_1, \
  \omega_2 \in \Omega_2\} \}.
\ee
Two elements  $D_1$ and $D_2$  of  $\cald$
are said  to be equal if 
$D_1(\omega_1, \omega_2) =  D_2(\omega_1, \omega_2)$
for any $\omega_1 \in \Omega_1$   and
$\omega_2 \in \Omega_2$.
Sometimes, we require  that $\cald$ 
is  neighborhood closed
which is defined as follows.

\begin{defn} 
\label{defepsneigh1}
A collection $\cald$ of some families 
of nonempty subsets of $X$
is said  to be   neighborhood closed if   for each
$D=\{D(\omega_1, \omega_2): 
\omega_1 \in \Omega_1, \omega_2 \in \Omega_2 \}
\in \cald$,   there exists a positive number
$\varepsilon$ depending on $D$ such that  the family
\be\label{defepsneigh2}
 \{ {B}(\omega_1, \omega_2) :
 {B}(\omega_1, \omega_2) \mbox{ is a  nonempty subset of }
 \caln_\varepsilon ( D (\omega_1, \omega_2) ),  \forall \
 \omega_1 \in \Omega_1,  \forall\  \omega_2
\in  \Omega_2\}
\ee
also belongs to $\cald$.
\end{defn}

\begin{defn} 
\label{temset} 
Let
$D=\{D(\omega_1, \omega_2): \omega_1 \in \Omega_1, \omega_2 \in \Omega_2 \}$
be a family of    nonempty subsets of $X$. 
We say $D$ is tempered in $X$ 
with respect to $(\Omega_1,  \{\thonet\}_{t \in \R})$
and
$(\Omega_2, \calftwo, P,  \{\thtwot\}_{t \in \R})$
if  there exists $x_0 \in X$ such that for every  $c>0$,
$\omega_1 \in \Omega_1$ and $\omega_2 \in \Omega_2$,
$$
\lim_{t \to -\infty}
e^{c t} d (x_0, D(\thonet \omega_1, \thtwot \omega_2))
=0.
$$
\end{defn}

\begin{defn} 
\label{defTlation}
Suppose   $T  \in \R$     and
  $\cald$  is  a collection  of some families of nonempty subsets of $X$
  as given by \eqref{defcald}.
  For  every  $D=\{D(\omega_1, \omega_2):
\omega_1 \in \Omega_1, \omega_2 \in \Omega_2 \} \in \cald$,  we write 
$$
D_T = \{ D_T(\omega_1, \omega_2): \  \ 
 D_T(\omega_1, \omega_2) = D(\theta_{1, T} \omega_1,  \omega_2), \ 
 \omega_1 \in \Omega_1, \ 
\omega_2 \in \Omega_2
\}.
$$
The family  $D_T$  is called    the  $T$-translation of
$D$. 
  Let $\cald_T$ be the collection of
    $T$-translations of  all elements of $\cald$, that is,
  $$
  \cald_T = \{ D_T: D_T \mbox{ is the }  T   \mbox{-translation of } D,
  \  D \in \cald \}.
  $$
  Then $\cald_T$ is called the $T$-translation of 
  the collection $\cald$.
  If $\cald_T \subseteq \cald$,   we say $\cald$ is
      $T$-translation   closed.
      If  $\cald_T =  \cald$,   we say $\cald$ is
      $T$-translation  invariant.
\end{defn}

One can check    that
  $\cald$ is $T$-translation  invariant
  if    and only if $\cald$ is both $-T$-translation closed   and $T$-translation 
  closed.  
 For later purpose, we need the concept of a complete orbit
 of $\Phi$ which is given below.

\begin{defn}
\label{comporbit}
 Let $\cald$ be a collection of some families of
 nonempty  subsets of $X$. A mapping $\psi: \R \times \Omega_1 \times \Omega_2$
 $\to X$ is called a complete orbit of $\Phi$ if for every $\tau \in \R$, $t \ge 0$,
 $\omega_1 \in \Omega_1$ and $\omega_2 \in \Omega_2$,  the following holds:
\be
\label{comporbit1}
 \Phi (t, \theta_{1, \tau} \omega_1, \theta_{2, \tau} \omega_2,
  \psi (\tau, \omega_1, \omega_2) )
  = \psi (t + \tau, \omega_1, \omega_2 ).
\ee
 If, in  addition,    there exists $D=\{D(\omega_1, \omega_2): \omega_1 \in \Omega,
 \omega_2 \in \Omega_2 \}\in \cald$ such that
 $\psi(t, \omega_1, \omega_2)$ belongs to
 $D(\theta_{1,t} \omega_1, \theta_{2, t} \omega_2 )$
 for every  $t \in \R$, $\omega_1 \in \Omega_1$
 and $\omega_2 \in \Omega_2$, then $\psi$ is called a
 $\cald$-complete orbit of $\Phi$.
 \end{defn}

\begin{defn}
\label{defomlit}
Let $B=\{B(\omega_1, \omega_2): \omega_1 \in \Omega_1, \ \omega_2  \in \Omega_2\}$
be a family of nonempty subsets of $X$.
For every $\omega_1 \in \Omega_1$ and
$\omega_2 \in \Omega_2$,  let
\be\label{omegalimit}
\Omega (B, \omega_1, \omega_2)
= \bigcap_{\tau \ge 0}
\  \overline{ \bigcup_{t\ge \tau} \Phi(t, \theta_{1,-t} \omega_1, \theta_{2, -t} \omega_2, B(\theta_{1,-t} \omega_1, \theta_{2,-t}\omega_2  ))}.
\ee
Then
the  family
 $\{\Omega (B, \omega_1, \omega_2): \omega_1 \in \Omega_1, \omega_2 \in \Omega_2 \}$
 is called the $\Omega$-limit set of $B$
 and is denoted by $\Omega(B)$.
 \end{defn}

\begin{defn}
Let $\cald$ be a collection of some families of nonempty subsets of $X$ and
$K=\{K(\omega_1, \omega_2): \omega_1 \in \Omega_1, \ \omega_2  \in \Omega_2\} \in \mathcal{D}$. Then
$K$  is called a  $\cald$-pullback
 absorbing
set for   $\Phi$   if
for all $\omega_1 \in \Omega_1$,
$\omega_2 \in \Omega_2 $
and  for every $B \in \cald$,
 there exists $T= T(B, \omega_1, \omega_2)>0$ such
that
\be
\label{abs1}
\Phi(t, \theta_{1,-t} \omega_1, \theta_{2, -t} \omega_2, B(\theta_{1,-t} \omega_1, \theta_{2,-t} \omega_2  ))  \subseteq  K(\omega_1, \omega_2)
\quad \mbox{for all} \ t \ge T.
\ee
If, in addition, for all $\omega_1 \in \Omega_1$ and $\omega_2 \in \Omega_2$,
   $K(\omega_1, \omega_2)$ is a closed nonempty subset of $X$
   and $K$ is measurable with respect to the $P$-completion of $\calftwo$
   in $\Omega_2$,
 then we say $K$ is a  closed measurable
  $\cald$-pullback absorbing  set for $\Phi$.
\end{defn}

\begin{defn}
\label{asycomp}
 Let $\cald$ be a collection of  some families of  nonempty
 subsets of $X$.
 Then
$\Phi$ is said to be  $\cald$-pullback asymptotically
compact in $X$ if
for all $\omega_1 \in \Omega_1$ and
$\omega_2 \in \Omega_2$,    the sequence
\be
\label{asycomp1}
\{\Phi(t_n, \theta_{1, -t_n} \omega_1, \theta_{2, -t_n} \omega_2,
x_n)\}_{n=1}^\infty \mbox{  has a convergent  subsequence  in }   X
\ee
 whenever
  $t_n \to \infty$, and $ x_n\in   B(\theta_{1, -t_n}\omega_1,
  \theta_{2, -t_n} \omega_2 )$   with
$\{B(\omega_1, \omega_2): \omega_1 \in \Omega_1, \ \omega_2 \in \Omega_2
\}   \in \mathcal{D}$.
\end{defn}

\begin{defn}
\label{defatt}
 Let $\cald$ be a collection of some families of
 nonempty  subsets of $X$
 and
 $\cala = \{\cala (\omega_1, \omega_2): \omega_1 \in \Omega_1,
  \omega_2 \in \Omega_2 \} \in \cald $.
Then     $\cala$
is called a    $\cald$-pullback    attractor  for
  $\Phi$
if the following  conditions (i)-(iii) are  fulfilled:
\begin{itemize}
\item [(i)]   $\cala$ is measurable
with respect to the $P$-completion of $\calftwo$ in $\Omega_2$ and
 $\cala(\omega_1, \omega_2)$ is compact for all $\omega_1 \in \Omega_1$
and    $\omega_2 \in \Omega_2$.

\item[(ii)]   $\cala$  is invariant, that is,
for every $\omega_1 \in \Omega_1$ and
 $\omega_2 \in \Omega_2$,
$$ \Phi(t, \omega_1, \omega_2, \cala(\omega_1, \omega_2)   )
= \cala (\theta_{1,t} \omega_1, \theta_{2,t} \omega_2
), \ \  \forall \   t \ge 0.
$$

\item[(iii)]   $\cala  $
attracts  every  member   of   $\cald$,  that is, for every
 $B = \{B(\omega_1, \omega_2): \omega_1 \in \Omega_1, \omega_2 \in \Omega_2\}
 \in \cald$ and for every $\omega_1 \in \Omega_1$ and
 $\omega_2 \in \Omega_2$,
$$ \lim_{t \to  \infty} d (\Phi(t, \theta_{1,-t}\omega_1, \theta_{2,-t}\omega_2, B(\theta_{1,-t}\omega_1, \theta_{2,-t}\omega_2) ) , \cala (\omega_1, \omega_2 ))=0.
$$
 \end{itemize}
 If, in addition, there exists $T>0$ such that
 $$
 \cala(\theta_{1, T} \omega_1, \omega_2) = \cala(\omega_1,    \omega_2 ),
 \quad \forall \  \omega_1 \in \Omega_1, \forall \
  \omega_2 \in \Omega_2,
 $$
 then we say $\cala$ is periodic with period $T$.
\end{defn}

The following result on the  
 existence and uniqueness of 
 $\cald$-pullback attractors 
for $\Phi$ can be found in \cite{wan4}. 
The reader is  referred to 
\cite{bat1,  cra2, fla1, schm1} for similar results
for random dynamical   systems.

 \begin{prop}
\label{att}  
 Let $\cald$ be a   neighborhood closed  
 collection of some  families of   nonempty subsets of
$X$,  and $\Phi$  be a continuous   cocycle on $X$
over $(\Omega_1,  \{\thonet\}_{t \in \R})$
and
$(\Omega_2, \calftwo, P,  \{\thtwot\}_{t \in \R})$.
Then
$\Phi$ has a  $\cald$-pullback
attractor $\cala$  in $\cald$
if and only if
$\Phi$ is $\cald$-pullback asymptotically
compact in $X$ and $\Phi$ has a  closed
   measurable (w.r.t. the $P$-completion of $\calftwo$)
     $\cald$-pullback absorbing set
  $K$ in $\cald$.
  The $\cald$-pullback
attractor $\cala$   is unique   and is given  by,
for each $\omega_1  \in \Omega_1$   and
$\omega_2 \in \Omega_2$,
\be\label{attform1}
\cala (\omega_1, \omega_2)
=\Omega(K, \omega_1, \omega_2)
=\bigcup_{B \in \cald} \Omega(B, \omega_1, \omega_2)
\ee
\be\label{attform2}
 =\{\psi(0, \omega_1, \omega_2): \psi \mbox{ is a   }  \cald {\rm -}
 \mbox{complete orbit of } \Phi\} .
 \ee
  \end{prop}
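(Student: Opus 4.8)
The plan is to treat the two implications of the equivalence separately, then establish the representation formulas \eqref{attform1}--\eqref{attform2}, and finally uniqueness. The forward implication is the routine one: if a $\cald$-pullback attractor $\cala$ already exists, then $\cala$ is compact, invariant, and lies in $\cald$, so by neighborhood closedness any fixed $\varepsilon$-neighborhood of $\cala$ furnishes a closed measurable $\cald$-pullback absorbing set, while $\cald$-pullback asymptotic compactness is immediate from attraction to the \emph{compact} set $\cala$ (cover $\cala$ by finitely many small balls and extract a convergent subsequence). The substance of the proposition is therefore the reverse implication together with the explicit description of $\cala$.

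For the reverse implication I would define the candidate by $\cala(\omega_1,\omega_2):=\Omega(K,\omega_1,\omega_2)$ as in \eqref{omegalimit}, where $K$ is the given closed measurable absorbing set, and verify the axioms of Definition~\ref{defatt} in order. First, because $K$ absorbs itself and $K(\omega_1,\omega_2)$ is closed, the tail of the union in \eqref{omegalimit} already sits inside $K(\omega_1,\omega_2)$, so $\cala(\omega_1,\omega_2)\subseteq K(\omega_1,\omega_2)$; combined with $\cald$-pullback asymptotic compactness this shows $\cala(\omega_1,\omega_2)$ is nonempty and precompact, and it is closed by construction, hence compact. Attraction is then a standard contradiction argument: if $\cala$ failed to attract some $B\in\cald$, one extracts $t_n\to\infty$ and $x_n\in B(\theta_{1,-t_n}\omega_1,\theta_{2,-t_n}\omega_2)$ whose images stay a fixed distance from $\cala(\omega_1,\omega_2)$, but asymptotic compactness produces a convergent subsequence whose limit lies in $\Omega(B,\omega_1,\omega_2)\subseteq\Omega(K,\omega_1,\omega_2)=\cala(\omega_1,\omega_2)$, a contradiction.

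Invariance is obtained from the cocycle identity~(iii) and continuity~(iv) of Definition~\ref{ds1}: the inclusion $\Phi(t,\omega_1,\omega_2,\cala(\omega_1,\omega_2))\subseteq\cala(\theta_{1,t}\omega_1,\theta_{2,t}\omega_2)$ follows by pushing the convergent sequences defining the $\Omega$-limit points through the continuous map $\Phi(t,\omega_1,\omega_2,\cdot)$, and the opposite inclusion from choosing preimages and invoking asymptotic compactness once more. That $\cala\in\cald$ is then immediate, since $\cala\subseteq K$ lies in every $\varepsilon$-neighborhood of $K$ and $\cald$ is neighborhood closed with $K\in\cald$. For the representation \eqref{attform1} one notes $\Omega(B,\omega_1,\omega_2)\subseteq\Omega(K,\omega_1,\omega_2)$ for all $B\in\cald$ by absorption, while $\Omega(K,\omega_1,\omega_2)=\cala(\omega_1,\omega_2)$ holds by definition; for \eqref{attform2}, the inclusion $\supseteq$ follows by applying the attraction property to a $\cald$-complete orbit, whereas $\subseteq$ requires building, for each $a\in\cala(\omega_1,\omega_2)$, a $\cald$-complete orbit through $a$ by a backward selection using invariance and a diagonal compactness extraction. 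Uniqueness then follows in the usual way: any two $\cald$-pullback attractors attract each other and are invariant and compact, forcing them to coincide.

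The main obstacle I anticipate is the measurability of $\cala$ with respect to the $P$-completion of $\calftwo$. Because $\Omega(K,\omega_1,\omega_2)$ is an intersection over $\tau\ge0$ of closures of \emph{uncountable} unions over $t\ge\tau$, measurability of the distance map $\omega_2\mapsto d(x,\cala(\omega_1,\omega_2))$ cannot be read off directly; one must exploit the joint measurability~(i) of $\Phi$ together with the measurability of $K$ and then invoke a measurable-selection or projection theorem, and it is precisely this step that forces the passage to the $P$-completion. The complete-orbit half of \eqref{attform2} is the second delicate point, since the backward construction must be carried out consistently across all shifts $\theta_{1,\tau},\theta_{2,\tau}$ while keeping the orbit inside a single member of $\cald$.
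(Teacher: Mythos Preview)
The paper does not actually give a proof of this proposition. Proposition~\ref{att} is presented in Section~2 as a quoted result (``The following result on the existence and uniqueness of $\cald$-pullback attractors for $\Phi$ can be found in \cite{wan4}''), and no argument is supplied in the paper itself. Consequently there is no paper proof against which to compare your attempt.

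That said, your outline is the standard one for results of this type and is essentially the argument carried out in \cite{wan4}. The forward implication, the construction $\cala(\omega_1,\omega_2)=\Omega(K,\omega_1,\omega_2)$, the contradiction argument for attraction, the use of the cocycle identity for invariance, the observation $\cala\subseteq K$ together with neighborhood closedness to place $\cala$ in $\cald$, and the uniqueness argument are all correct. You have also correctly identified the two genuinely delicate points: the measurability of $\omega_2\mapsto\cala(\omega_1,\omega_2)$ (which indeed requires a projection/selection theorem and is the reason one works with the $P$-completion of $\calftwo$), and the backward construction of $\cald$-complete orbits in \eqref{attform2} via a diagonal compactness extraction. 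One minor refinement for the forward direction: when you form an $\varepsilon$-neighborhood of $\cala$ to serve as the absorbing set, take the \emph{closed} neighborhood $\{x:d(x,\cala(\omega_1,\omega_2))\le\varepsilon\}$; its measurability in $\omega_2$ follows from that of $\cala$ via the identity $d(x,\overline{\caln_\varepsilon(A)})=\max\{0,d(x,A)-\varepsilon\}$.
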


The periodicity of   $\cald$-pullback attractors  is 
proved in \cite{wan4} as given  below.

\begin{prop}
\label{periodatt}
Let   $T$   be   a positive number.
Suppose  $\Phi$   is  a continuous  periodic   cocycle
with period $T$  on $X$
over $(\Omega_1,  \{\thonet\}_{t \in \R})$
and
$(\Omega_2, \calftwo, P,  \{\thtwot\}_{t \in \R})$.
 Let   $\cald$   be  a  neighborhood closed 
   and $T$-translation invariant collection of
    some  families of   nonempty subsets of
$X$.
 If
$\Phi$ is $\cald$-pullback asymptotically
compact in $X$ and $\Phi$ has a  closed
   measurable (w.r.t. the $P$-completion of $\calftwo$)
     $\cald$-pullback absorbing set
  $K$ in $\cald$, then $\Phi$
  has a unique periodic
   $\cald$-pullback
attractor $\cala \in \cald$    with period $T$,  i.e., 
$\cala (\theta_{1, T} \omega_1,  \omega_2)
=\cala(\omega_1, \omega_2)$.
\end{prop}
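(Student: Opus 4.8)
The plan is to reduce the periodicity statement to the already-established existence and uniqueness result (Proposition 2.10). First I would invoke Proposition 2.10: since $\cald$ is neighborhood closed and $\Phi$ is $\cald$-pullback asymptotically compact with a closed measurable $\cald$-pullback absorbing set $K\in\cald$, the cocycle $\Phi$ has a unique $\cald$-pullback attractor $\cala\in\cald$, characterized by $\cala(\omega_1,\omega_2)=\Omega(K,\omega_1,\omega_2)$. The entire task is then to show the extra symmetry $\cala(\theta_{1,T}\omega_1,\omega_2)=\cala(\omega_1,\omega_2)$ for all $\omega_1,\omega_2$.

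The key idea is a uniqueness-by-translation argument. I would define a translated family $\widetilde{\cala}$ by $\widetilde{\cala}(\omega_1,\omega_2)=\cala(\theta_{1,T}\omega_1,\omega_2)$, and show that $\widetilde{\cala}$ is itself a $\cald$-pullback attractor for $\Phi$; uniqueness in Proposition 2.10 then forces $\widetilde{\cala}=\cala$, which is exactly the desired periodicity. To carry this out I would verify conditions (i)-(iii) of Definition 2.9 for $\widetilde{\cala}$. The crucial inputs are the periodicity of the cocycle, $\Phi(t,\theta_{1,T}\omega_1,\omega_2,\cdot)=\Phi(t,\omega_1,\omega_2,\cdot)$, and the $T$-translation invariance of $\cald$. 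For membership $\widetilde{\cala}\in\cald$, I would use that $\cald$ is $T$-translation invariant, so the $T$-translation of $\cala$, namely $\cala_T$ with $\cala_T(\omega_1,\omega_2)=\cala(\theta_{1,T}\omega_1,\omega_2)=\widetilde{\cala}(\omega_1,\omega_2)$, again lies in $\cald$. Measurability and compactness in (i) transfer immediately since $\theta_{1,T}$ only acts on the non-measurable parameter $\omega_1$.

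The heart of the verification is the invariance condition (ii). Using periodicity of $\Phi$ and then invariance of $\cala$ I would compute, for $t\ge 0$,
\begin{align*}
\Phi(t,\omega_1,\omega_2,\widetilde{\cala}(\omega_1,\omega_2))
&=\Phi(t,\theta_{1,T}\omega_1,\omega_2,\cala(\theta_{1,T}\omega_1,\omega_2))\\
&=\cala(\theta_{1,t}\theta_{1,T}\omega_1,\theta_{2,t}\omega_2)
=\cala(\theta_{1,T}\theta_{1,t}\omega_1,\theta_{2,t}\omega_2)
=\widetilde{\cala}(\theta_{1,t}\omega_1,\theta_{2,t}\omega_2),
\end{align*}
where the first equality is cocycle periodicity, the second is invariance of $\cala$, the third uses that $\theta_{1,t}$ and $\theta_{1,T}$ commute (both are $\theta_1$-translations), and the last is the definition of $\widetilde{\cala}$. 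For the attraction condition (iii), I would again replace $\Phi(\cdot,\theta_{1,-t}\omega_1,\cdot)$ using periodicity to write it as $\Phi(\cdot,\theta_{1,T}\theta_{1,-t}\omega_1,\cdot)$, observe that the pullback limit defining attraction for $\cala$ at the base point $\theta_{1,T}\omega_1$ coincides with the corresponding limit for $\widetilde{\cala}$ at $\omega_1$, and conclude that $\widetilde{\cala}$ attracts every $B\in\cald$.

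The main obstacle I anticipate is purely bookkeeping rather than conceptual: one must be careful that the periodicity of $\Phi$ is stated as invariance under $\theta_{1,T}$ applied to the first parameter only, and that $\theta_{1,T}$ commutes with $\theta_{1,t}$ (which follows from the group law $\theta_{1,s+t}=\theta_{1,t}\circ\theta_{1,s}$ since translations on $\Omega_1$ form a commutative flow). I would also need to confirm that the absorbing set and asymptotic compactness hypotheses persist for the translated data, but these are guaranteed by the $T$-translation invariance of $\cald$, so no new estimate is required. Once (i)-(iii) are checked, uniqueness of the $\cald$-pullback attractor in $\cald$ delivers $\widetilde{\cala}=\cala$, completing the proof.
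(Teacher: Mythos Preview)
The paper does not actually prove this proposition: it is quoted from \cite{wan4} without argument, so there is no in-paper proof to compare against. Your approach---build $\widetilde{\cala}(\omega_1,\omega_2)=\cala(\theta_{1,T}\omega_1,\omega_2)$, verify Definition~\ref{defatt} for $\widetilde{\cala}$, and invoke uniqueness from Proposition~\ref{att}---is the standard route and is correct.

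One point you gloss over in the attraction step deserves a sentence of care. After using periodicity to rewrite $\Phi(t,\theta_{1,-t}\omega_1,\theta_{2,-t}\omega_2,\cdot)$ as $\Phi(t,\theta_{1,-t}(\theta_{1,T}\omega_1),\theta_{2,-t}\omega_2,\cdot)$, the set being evolved is still $B(\theta_{1,-t}\omega_1,\theta_{2,-t}\omega_2)$, not $B(\theta_{1,-t}\theta_{1,T}\omega_1,\theta_{2,-t}\omega_2)$. To match the attraction statement for $\cala$ at the base point $(\theta_{1,T}\omega_1,\omega_2)$ you must recognise this as $B_{-T}(\theta_{1,-t}\theta_{1,T}\omega_1,\theta_{2,-t}\omega_2)$ and use that $B_{-T}\in\cald$; this is exactly where $T$-translation invariance (hence $\cald_{-T}=\cald$) is consumed. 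You allude to this but it is worth making explicit. With that adjustment your proposal is complete.
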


\section{Cocycles  for Navier-Stokes Equations on Unbounded Domains}
\setcounter{equation}{0}

This section is devoted  to the existence of   
a continuous cocycle for the stochastic Navier-Stokes
equations with  non-autonomous deterministic terms.
Suppose   $Q$  is  an unbounded open set in $\R^2$ with boundary
$\partial Q$.
 Then consider       the following    stochastic 
   equations with multiplicative noise
    defined on $Q \times (\tau, \infty)$
   with  $\tau \in \R$:  
 \be
 \label{nse1}
 {\frac {\partial u}{\partial t}}
 -\nu \Delta u  + (u \cdot \nabla) u
 = f(x,t) -\nabla p
 +  \alpha u \circ {\frac {d w}{dt}}, \quad x \in Q \ \ \mbox{ and } \ t >\tau, 
 \ee
 \be\label{nse2}
 {\rm  div }\  u =0,
  \quad x \in Q \ \ \mbox{ and } \ t >\tau, 
 \ee
 with boundary condition
 \be\label{nse3}
 u = 0,   \quad x \in  \partial Q \ \ \mbox{ and } \ t >\tau, 
 \ee
 and initial condition
 \be
 \label{nse4}
 u(x, \tau) =u_\tau (x),  \quad x \in Q , 
 \ee
 where $\nu$ and $\alpha$
 are constants, $\nu>0$,  $f$ is a 
 given function defined on $Q \times \R$, 
 and $w$  
 is  a  two-sided real valued Wiener process 
 defined in a 
 probability space. 
 Note that equation \eqref{nse1} must be
 understood in the sense of Stratonovich  integration. 
 
 To reformulate problem \eqref{nse1}-\eqref{nse4},  we
 recall  the  
   standard function space:
   $$
   {\mathcal{V}} = \{ u \in C_0^\infty (Q) \times C_0^\infty (Q):
   {\rm div }\   u =0 \}.
   $$
   Let 
    $H $ 
    and  $V$   be  the closures of ${\mathcal{V}}$ in
    $L^2(Q) \times L^2(Q)$ and $H^1_0(Q) \times H^1_0(Q)$,
    respectively.     
    The dual space  of $V$ is denoted by $V^*$ with 
    norm $\| \cdot \|_{V^*}$.
    The duality pair between $V$   and
    $V^*$ is denoted by
    $\langle\cdot, \cdot \rangle$.
      Given $u, v \in V$, we    set
    $$
    (Du, Dv) = \sum_{i, j=1}^2 \int_Q 
    {\frac {\partial u_i}{\partial x_j}} 
 {\frac {\partial v_i}{\partial x_j}}  dx
 \quad {\rm and } \quad
 \| Du \| = (Du, Du)^{\frac 12}.
 $$
 For convenience, we write, for each
 $u, v, w \in V$,
 $$
 b(u,v,w) =  \sum_{i, j=1}^2 \int_Q 
 u_i {\frac {\partial u_j}{\partial x_i}} w_j dx .
 $$

Let $\{\thonet\}_{t \in \R}$  be a family 
of shift operators on $\R$   which is given by,
for each $t \in \R$,
 \be
 \label{shiftr}
 \thonet  (\tau) =   \tau + t, \quad \mbox{ for all } \ \tau \in  \R.
 \ee
 For  the  probability space we will use later, we write 
 $$   
\Omega = \{ \omega   \in C(\R, \R ): \ \omega(0) =  0 \}.
$$
Let $\calf$  be
 the Borel $\sigma$-algebra induced by the
compact-open topology of $\Omega$, and $P$
be  the corresponding Wiener
measure on $(\Omega, \calf)$. 
As usual, for each $t \in \R$  and
$\omega \in \Omega$, we may write
$w_t (\omega) = \omega (t)$.  
Denote by    $\{\thtwot \}_{t \in \R}$  
the standard group  on  
 $(\Omega, \calf, P)$:
\be\label{shiftome}
 \thtwot \omega (\cdot) = \omega (\cdot +t) - \omega (t), \quad  \omega \in \Omega, \ \ t \in \R .
\ee
Then $(\Omega, \mathcal{F}, P, \{\thtwot\}_{t\in \R})$ is a  parametric 
dynamical  system. In addition,        
  there exists a $\thtwot$-invariant set 
  $\tilde{\Omega}\subseteq \Omega$
of full $P$ measure  such that
for each $\omega \in \tilde{\Omega}$, 
\be\label{aspomega}
{\frac  {\omega (t)}{t}} \to 0 \quad \mbox {as } \ t \to \pm \infty.
\ee
From now on, we only consider the space 
$\tilde{\Omega}$ instead of $\Omega$, and hence
we  will write  $\tilde{\Omega}$ as 
$\Omega$  for   convenience.

  We next  define a continuous
   cocycle   for
   problem \eqref{nse1}-\eqref{nse4}
     in $H$  over
  $(\R, \{\thonet\}_{t \in \R})$  
  and $(\Omega, \calf, P, \{\thtwot\}_{t \in \R})$.
  To  this  end,   we need to
  transfer the stochastic equation into a deterministic one with
  random parameters.
  Given $t \in \R$  and $\omega \in \Omega$,   let
  $ z(t, \omega) = e^{-\alpha \omega  (t)}$. 
  Then   we find that  $z$ is a solution of  the equation
  \be\label{zequ1}
  d z = - \alpha z \circ d w.
 \ee
 Let $v$ be a new variable given by
 \be\label{vu}
 v(t, \tau, \omega, v_\tau) = z(t, \omega)
 u(t, \tau, \omega, u_\tau)
 \quad \mbox{ with } \ v_\tau = z(\tau, \omega) u_\tau.
\ee
 Formally,  from \eqref{nse1}-\eqref{nse4} and \eqref{zequ1} 
 we get that
  \be
 \label{v1}
 {\frac {\partial v}{\partial t}}
 -\nu \Delta v  + {\frac 1{z(t,\omega)}}  (v \cdot \nabla) v
 = z(t, \omega) \left (f(x,t) -\nabla p \right ), 
 \quad x \in Q \ \ \mbox{ and } \ t >\tau, 
 \ee
 \be\label{v2}
 {\rm  div }\  v =0,
  \quad x \in Q \ \ \mbox{ and } \ t >\tau, 
 \ee
 with boundary condition
 \be\label{v3}
 v = 0,   \quad x \in  \partial Q \ \ \mbox{ and } \ t >\tau, 
 \ee
 and initial condition
 \be
 \label{v4}
 v(x, \tau) =v_\tau (x),  \quad x \in Q .
 \ee

 Let  $\tau \in \R$, $\omega \in \Omega$,
  and
  $v_\tau \in H$. 
   A mapping $v (\cdot, \tau, \omega, v_\tau)$: $[\tau, \infty)
     \to H$  is called   a solution of problem
     \eqref{v1}-\eqref{v4} if  for every
   $T>0$,  
   $$v(\cdot, \tau, \omega, u_\tau)
     \in C([\tau, \infty), H)  \bigcap
   L^2 ((0,T), V)
 $$
and $v$  satisfies
\be\label{solv1}
(v(t), \zeta)
+ \nu \int_\tau^t (D v,  D \zeta) ds
+ \int_\tau^t {\frac 1{z(s, \omega)}}
b(v , v, \zeta) ds
=  (v_\tau, \zeta) + \int_\tau^t 
   z(s, \omega)  \langle    f(\cdot, s),  
  \  \zeta \rangle  ds,
\ee
for every $t \ge \tau$   and
 $\zeta \in V$.
 If,   in addition, 
 $v$ is 
 $(\calf, \calb(H))$-measurable
 with respect to $\omega \in \Omega$,
  we say   $v$ is a measurable solution of
  problem
     \eqref{v1}-\eqref{v4}.
     Since    \eqref{v1} is  a deterministic equation, it follows from 
     \cite{tem1}  that  for every $\tau \in \R$,
      $v_\tau \in H$   and $\omega \in \Omega$, problem 
      \eqref{v1}-\eqref{v4} has a unique solution
      $v$  in the sense
      of \eqref{solv1}
      which  continuously  depends  on $v_\tau$
      with the respect to the norm of $H$. Moreover, 
       the solution  $v$  is $(\calf, \calb (H))$-measurable
      in $\omega \in \Omega$.    
      This enables us to     define a cocycle
       $\Phi: \R^+ \times \R \times \Omega \times H$
$\to H$ for      problem
\eqref{nse1}-\eqref{nse4} by using \eqref{vu}.
Given $t \in \R^+$,  $\tau \in \R$, $\omega \in \Omega$ 
 and $u_\tau \in H$,
let 
 \be \label{nsephi}
 \Phi (t, \tau,  \omega, u_\tau) = 
  u (t+\tau,  \tau, \theta_{2, -\tau} \omega, u_\tau) 
  = {\frac 1{z(t+\tau, \theta_{2, -\tau} \omega)}}
v(t+\tau, \tau,  \theta_{2, -\tau} \omega,  v_\tau),  
\ee
where $v_\tau = z(\tau, \theta_{2, -\tau} \omega) u_\tau $. 
By \eqref{nsephi} we have,   for every
$t \ge 0$, $\tau \ge 0$, $r \in \R$,  
$\omega \in \Omega$ and $u_0 \in H$,
\be
\label{pcon1}
\Phi (t + \tau, r, \omega,  u_0)
=  {\frac 1{z(t+\tau +r, \theta_{2, -r} \omega)}}
v(t+\tau +r,  r,   \theta_{2, -r} \omega,  v_0), 
\ee
where $v_0 = z(r, \theta_{2,-r} \omega ) u_0$.
Similarly,  we have
$$
\Phi \left (t , \tau + r,  \theta_{2,\tau}\omega, 
 \Phi (\tau, r, \omega, u_0) 
\right )
$$
$$
=  {\frac 1{z(t+\tau +r, \theta_{2, -r} \omega)}}
v(t+\tau +r,  \tau +r,   \theta_{2, -r} \omega, 
 z(\tau +r, \theta_{2, -r} \omega)  \Phi (\tau, r, \omega, u_0)  )
$$
$$
=  {\frac 1{z(t+\tau +r, \theta_{2, -r} \omega)}}
v(t+\tau +r,  \tau +r,   \theta_{2, -r} \omega, 
  v(\tau +r, r,  \theta_{2, -r} \omega,  v_0) 
$$
\be
\label{pcon2}
=  {\frac 1{z(t+\tau +r, \theta_{2, -r} \omega)}}
v(t+\tau +r,  r,   \theta_{2, -r} \omega,  v_0).
\ee
It follows from \eqref{pcon1}-\eqref{pcon2} that
\be
\label{pcon3}
\Phi (t + \tau, r, \omega,  u_0)
=\Phi \left (t , \tau + r,  \theta_{2,\tau}\omega, 
 \Phi (\tau, r, \omega, u_0) 
\right ).
\ee
Since $v$  is  the   measurable solution of problem 
\eqref{v1}-\eqref{v4} which is continuous in initial data in $H$,
we find   from \eqref{pcon3}  that  $\Phi$ 
is a continuous   cocycle on $H$ over $(\R,  \{\thonet\}_{t \in \R})$
and 
$(\Omega, \calf,
P, \{\thtwot \}_{t\in \R})$.
The rest of this paper is devoted to the 
existence of pullback attractors
for $\Phi$   in $H$.  To  this end, we assume that   the open set
$Q$ is a Poincare domain in the sense    that   there
exists a positive number $\lambda$ such that
\be
\label{poincare}
\int_Q  |\nabla \phi (x) |^2  dx
\ge \lambda \int_Q |\phi  (x) |^2 dx,
\quad \mbox{   for   all } \ \phi \in H^1_0 (Q).
\ee

 Given a  bounded nonempty  subset 
 $B$  of $H$,  we write 
   $  \| B\| = \sup\limits_{\phi \in B}
   \| \phi\|_{H }$. 
Suppose 
   $D =\{ D(\tau, \omega): \tau \in \R, \omega \in \Omega \}$  
    is   a  tempered family of
  bounded nonempty   subsets of $H $,  that is,  
  for every  $c>0$, $\tau \in \R$   and $\omega \in \Omega$, 
 \be
 \label{attdom1}
 \lim_{r \to   \infty} e^{ -  c  r} 
 \| D( \tau  -r, \theta_{2, -r} \omega ) \|  =0. 
 \ee 
Let $\cald$  be      the  collection of all  tempered families of
bounded nonempty  subsets of $H$, i.e.,
 \be
 \label{dnse}
\cald = \{ 
   D =\{ D(\tau, \omega): \tau \in \R, \omega \in \Omega \}: \ 
 D  \ \mbox{satisfies} \  \eqref{attdom1} \} .
\ee
From \eqref{dnse}   we see  that
$\cald$ is  neighborhood closed.
For later purpose,   we assume that   the external term
$f$ satisfies    the  following condition:   there exists
a number $\delta \in [0, \nu\lambda)$
such that 
 \be
 \label{fcond1}
 \int_{-\infty}^\tau e^{\delta  r } \| f(\cdot, r)\|^2_ {V^*}d r
<  \infty, \quad \forall \ \tau \in \R.
 \ee
 When proving the existence of tempered pullback absorbing
 sets for the Navier-Stokes equations, we also assume  that
 there   exists
  $\delta \in [0, \nu\lambda)$
such that  for every positive number $c$, 
 \be
 \label{fcond2}
 \lim_{r \to -\infty} e^{c r} 
 \int_{-\infty}^0   e^{\delta  s}
  \|f(\cdot,  s+r) \|^2_{V^*}    d s  =0.
  \ee
  Note that \eqref{fcond2} implies \eqref{fcond1} if
  $f \in L^2_{loc} (\R, V^*)$. 
  It is worth pointing out that
 both  conditions  \eqref{fcond1} 
 and \eqref{fcond2} 
do   not require that    $f$ 
is  bounded in $V^*$
at $ \pm \infty$.  For instance,  for any $\beta \ge 0$  and
$f_1 \in V^*$,  the function $ f(\cdot, t) = t^\beta f_1  $
satisfies both     \eqref{fcond1} 
 and \eqref{fcond2}.

\section{Uniform Estimates of Solutions}
\setcounter{equation}{0}

  In this section,  we
 derive uniform estimates  on the 
  solutions  of  problem \eqref{v1}-\eqref{v4} 
  and then prove the $\cald$-pullback asymptotic compactness
  of the solutions    by the    idea  of energy equations
   as introduced by Ball in \cite{bal1}
   for deterministic systems.

\begin{lem}
\label{lem1}
 Suppose  \eqref{poincare} and \eqref{fcond1} hold.
Then for every $\tau \in \R$, $\omega \in \Omega$   and $D=\{D(\tau, \omega)
: \tau \in \R,  \omega \in \Omega\}  \in \cald$,
 there exists  $T=T(\tau, \omega,  D)>0$ such that for all $t \ge T$ and
 $s  \ge  \tau -t $, the solution
 $v$ of  problem  \eqref{v1}-\eqref{v4}  with $\omega$ replaced by
 $\theta_{2, -\tau} \omega$  satisfies
$$
\| v(s , \tau -t,  \theta_{2, -\tau} \omega, v_{\tau -t}  ) \|^2 
 \le 
 e^{\nu \lambda (\tau -s)}
 + {\frac 2\nu} e^{-\nu \lambda s}
 \int_{-\infty}^s
 e^{\nu\lambda r}
 z^2(r, \theta_{2,-\tau} \omega) \| f(\cdot, r)\|_{V^*}^2 dr,
$$
and
 $$
  \int_{\tau -t}^s e^{ \nu \lambda r}
  \| Dv(r, \tau -t,\theta_{2, -\tau} \omega, v_{\tau -t} ) \|^2 dr
\le {\frac 2\nu} 
e^{\nu \lambda \tau  }
 +  {\frac 4{\nu^2}}  
 \int_{-\infty}^s
 e^{\nu\lambda r}
 z^2(r, \theta_{2,-\tau} \omega) \| f(\cdot, r)\|_{V^*}^2 dr,
$$
 where $v_{\tau -t}\in D(\tau -t, \theta_{2, -t} \omega)$.
\end{lem}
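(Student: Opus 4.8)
The plan is to extract a single weighted differential inequality from the energy balance and then integrate it in two complementary ways. First I would test the weak formulation \eqref{solv1}, written with $\omega$ replaced by $\theta_{2,-\tau}\omega$, against $\zeta=v(r)$. The regularity of $v$ recorded after \eqref{solv1}, together with $v'\in L^2_{\mathrm{loc}}(V^*)$, justifies this through the Lions--Magenes identity $\frac{d}{dr}\|v\|^2=2\langle v',v\rangle$. The pressure has already been eliminated in $V$, and the nonlinear term vanishes by the orthogonality $b(v,v,v)=0$ (which holds despite the factor $1/z$), so I obtain the energy equation
\[
\frac{d}{dr}\|v\|^2 + 2\nu\|Dv\|^2 = 2\,z(r,\theta_{2,-\tau}\omega)\,\langle f(\cdot,r),v\rangle .
\]

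The key step is to split the dissipation as $2\nu\|Dv\|^2=\frac{\nu}{2}\|Dv\|^2+\nu\|Dv\|^2+\frac{\nu}{2}\|Dv\|^2$ and give each piece a distinct role: the first absorbs the forcing by Young's inequality, $2z\langle f,v\rangle\le\frac{\nu}{2}\|Dv\|^2+\frac{2}{\nu}z^2\|f\|_{V^*}^2$ (with the norm $\|v\|_V=\|Dv\|$ on $V$); the middle piece produces decay at the exact rate $\nu\lambda$ through the Poincar\'e inequality \eqref{poincare}, namely $\nu\|Dv\|^2\ge\nu\lambda\|v\|^2$; and the last piece is retained for the gradient estimate. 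This gives $\frac{d}{dr}\|v\|^2+\nu\lambda\|v\|^2+\frac{\nu}{2}\|Dv\|^2\le\frac{2}{\nu}z^2\|f\|_{V^*}^2$, and multiplying by $e^{\nu\lambda r}$ yields the master inequality
\[
\frac{d}{dr}\Big(e^{\nu\lambda r}\|v\|^2\Big) + \frac{\nu}{2}\,e^{\nu\lambda r}\|Dv\|^2 \le \frac{2}{\nu}\,e^{\nu\lambda r}z^2(r,\theta_{2,-\tau}\omega)\|f(\cdot,r)\|_{V^*}^2 .
\]

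Integrating this over $[\tau-t,s]$ produces both bounds. For the first, I discard the nonnegative gradient term, divide by $e^{\nu\lambda s}$, and enlarge $\int_{\tau-t}^s$ to $\int_{-\infty}^s$; for the second, I instead discard the boundary term $e^{\nu\lambda s}\|v(s)\|^2\ge0$ and multiply by $\frac{2}{\nu}$. In both cases the only remaining input is the initial datum, appearing as $e^{\nu\lambda(\tau-s)}e^{-\nu\lambda t}\|v_{\tau-t}\|^2$ and $\frac{2}{\nu}e^{\nu\lambda\tau}e^{-\nu\lambda t}\|v_{\tau-t}\|^2$, respectively. Since $v_{\tau-t}\in D(\tau-t,\theta_{2,-t}\omega)$, temperedness \eqref{attdom1} with $c=\nu\lambda/2$ and $r=t$ gives $e^{-\nu\lambda t}\|v_{\tau-t}\|^2\le\big(e^{-\frac{\nu\lambda}{2}t}\|D(\tau-t,\theta_{2,-t}\omega)\|\big)^2\to0$, so there is $T=T(\tau,\omega,D)$ with $e^{-\nu\lambda t}\|v_{\tau-t}\|^2\le1$ for all $t\ge T$. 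This replaces the two initial-data terms by $e^{\nu\lambda(\tau-s)}$ and $\frac{2}{\nu}e^{\nu\lambda\tau}$, which are exactly the claimed constants; finiteness of $\int_{-\infty}^s e^{\nu\lambda r}z^2\|f\|_{V^*}^2\,dr$ follows from \eqref{fcond1} and the subexponential growth \eqref{aspomega} of $z$.

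The one genuinely delicate point is the three-way split of the dissipation: it must be arranged so that the Poincar\'e term supplies decay at precisely the rate $\nu\lambda$ carried by the weight $e^{\nu\lambda r}$, while still leaving a positive multiple of $\|Dv\|^2$ for the second bound and enough room for Young's inequality. This is what makes both estimates fall out of the single inequality above with the sharp constants $\frac{2}{\nu}$, $\frac{4}{\nu^2}$, $e^{\nu\lambda(\tau-s)}$ and $\frac{2}{\nu}e^{\nu\lambda\tau}$; everything else---the rigorous energy equality and the absorption of the initial data via temperedness---is routine.
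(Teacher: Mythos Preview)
Your proposal is correct and follows essentially the same route as the paper: derive the energy identity, apply Young's inequality to the forcing term and Poincar\'e to part of the dissipation to reach exactly the paper's inequality \eqref{plem1_3}, then multiply by $e^{\nu\lambda r}$, integrate, and absorb the initial data via temperedness while invoking \eqref{fcond1} and \eqref{aspomega} for finiteness of the forcing integral. Your ``three-way split'' of $2\nu\|Dv\|^2$ is just a repackaging of the paper's two sequential steps (Young with coefficient $\tfrac{\nu}{2}$, then Poincar\'e on $\nu\|Dv\|^2$ out of the remaining $\tfrac{3}{2}\nu\|Dv\|^2$), and all constants match.
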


\begin{proof}
 Formally, it follows   from \eqref{v1}-\eqref{v3}  that for
each $\tau \in \R$, $t \ge 0$  and $\omega \in \Omega$, 
\be\label{plem1_1}
{\frac 12} {\frac d{dt}} \| v\|^2  +
\nu \| Dv \|^2
= z(t, \omega) \langle f(\cdot, t), v \rangle .
\ee
The right-hand side of \eqref{plem1_1} is bounded by
$$|z(t, \omega) \langle f(\cdot, t), v \rangle |
\le {\frac 14}\nu \| D v\|^2
+ {\frac 1\nu} z^2(t,  \omega ) \| f(\cdot, t)\|^2_{V^*}.
$$
Therefore,  from \eqref{plem1_1}   we get
\be\label{plem1_2}
 {\frac d{dt}} \| v\|^2  +
{\frac 32} \nu \| Dv \|^2
\le
{\frac 2\nu} z^2(t,  \omega ) \| f(\cdot, t)\|^2_{V^*}.
\ee
By \eqref{poincare} and \eqref{plem1_2}   we have
\be\label{plem1_3}
 {\frac d{dt}} \| v\|^2  
 +\nu\lambda \| v \|^2 +
{\frac 12} \nu \| Dv \|^2
\le
{\frac 2\nu} z^2(t,  \omega ) \| f(\cdot, t)\|^2_{V^*}.
\ee
Multiplying \eqref{plem1_3}   by $e^{\nu\lambda t}$ and then
integrating the inequality on $[\tau -t, s]$,
we obtain
$$
\| v(s, \tau -t, \omega, v_{\tau -t} )\|^2
+{\frac 12} \nu
\int_{\tau -t}^s
e^{\nu \lambda (r-s)} 
\|D v(r, \tau -t, \omega, v_{\tau-t}) \|^2 dr
$$
$$
\le
e^{\nu\lambda (\tau -s)} e^{-\nu \lambda t} \| v_{\tau -t} \|^2
+{\frac 2\nu} \int_{\tau -t} ^s
e^{\nu\lambda (r-s)} z^2(r, \omega) \| f(\cdot, r)\|^2_{V^*} dr.
$$
Replacing $\omega$   by 
$\theta_{2, -\tau} \omega$ in the above,   we get that
$$
\| v(s, \tau -t, \theta_{2, -\tau} \omega, v_{\tau -t} )\|^2
+{\frac 12} \nu
\int_{\tau -t}^s
e^{\nu \lambda (r-s)} 
\|D v(r, \tau -t,  \theta_{2, -\tau} \omega, v_{\tau-t}) \|^2 dr
$$
\be\label{plem1_5}
\le
e^{\nu\lambda (\tau -s)} e^{-\nu \lambda t} \| v_{\tau -t} \|^2
+{\frac 2\nu} e^{- \nu\lambda  s }
\int_{\tau -t} ^s
e^{\nu\lambda r} z^2(r,  \theta_{2, -\tau} \omega) \| f(\cdot, r)\|^2_{V^*} dr.
\ee
We now  estimate    the last   term
on the right-hand side  of \eqref{plem1_5}.
Let $\tomega = \theta_{2, -\tau}  \omega$. Then 
by \eqref{aspomega} we find that there  exists $R<0$ such that
for all $r \le R$,
$$
-2\alpha \tomega (r)
\le -(\nu \lambda -\delta) r,
$$
where $\delta$ is the positive constant
in \eqref{fcond1}. Therefore, for   all $r \le R$,
\be
\label{plem1_6}
z^2(r, \tomega)
=e^{-2\alpha \tomega (r)}
 \le e^{-(\nu\lambda -\delta)r}.
\ee
By \eqref{plem1_6}   we have for all $r \le R$,
$$
e^{\nu\lambda r} z^2(r,  \theta_{2, -\tau} \omega) \| f(\cdot, r)\|^2_{V^*} 
=
e^{(\nu\lambda-\delta) r} 
z^2(r,  \tomega)  e^{\delta r} \| f(\cdot, r)\|^2_{V^*} 
\le e^{\delta r} \| f(\cdot, r)\|^2_{V^*} ,
$$
which along with \eqref{fcond1}  shows    that
for every $s \in \R$, $\tau \in \R$    and   
$\omega \in \Omega$,
\be\label{plem1_7}
\int_{-\infty}^s
e^{\nu\lambda r} z^2(r,  \theta_{2, -\tau} \omega) \| f(\cdot, r)\|^2_{V^*}  dr
< \infty.
\ee
On the other hand,
since $v_{\tau -t} \in D(\tau -t, \theta_{2, -t} \omega)$,
  for the first term on the
right-hand side of \eqref{plem1_5}, we have
$$
e^{-\nu \lambda t} \| v_{\tau -t} \|^2
\le 
e^{-\nu \lambda t} \| D(\tau -t, \theta_{2, -t} \omega ) \|^2
\to 0,\quad
\mbox{ as  }  \  t \to  \infty.$$
This shows  that there exists
$T= T(\tau, \omega, D)>0$ such that
$e^{-\nu \lambda t} \| v_{\tau -t} \|^2 \le 1$
for   all $t \ge T$.
Thus, the   first term on the
right-hand side of \eqref{plem1_5} satisfies
\be\label{plem1_8}
e^{\nu\lambda (\tau -s)} e^{-\nu \lambda t} \| v_{\tau -t} \|^2
\le
e^{\nu\lambda (\tau -s)},
\quad \mbox{ for all } \  t\ge T.
\ee
From  \eqref{plem1_5}, \eqref{plem1_7} and \eqref{plem1_8}, 
the lemma    follows.   \end{proof}

As an immediate consequence of Lemma \ref{lem1}, we have the following estimates
on the solutions of problem \eqref{v1}-\eqref{v4}.

\begin{lem}
\label{lem2}
 Suppose  \eqref{poincare} and \eqref{fcond1} hold.
Then for every $\tau \in \R$, $\omega \in \Omega$   and $D=\{D(\tau, \omega)
: \tau \in \R,  \omega \in \Omega\}  \in \cald$,
 there exists  $T=T(\tau, \omega,  D)>0$ such that for 
 every $k \ge 0$   and    for all $t \ge T +k$, 
 the solution
 $v$ of  problem  \eqref{v1}-\eqref{v4}  with $\omega$ replaced by
 $\theta_{2, -\tau} \omega$  satisfies
$$
\| v(\tau -k , \tau -t,  \theta_{2, -\tau} \omega, v_{\tau -t}  ) \|^2 
 \le 
 e^{\nu \lambda k}
 + {\frac 2\nu} e^{\nu \lambda (k -\tau)}
 \int_{-\infty}^{\tau -k}
 e^{\nu\lambda r}
 z^2(r, \theta_{2,-\tau} \omega) \| f(\cdot, r)\|_{V^*}^2 dr,
 $$
 where $v_{\tau -t}\in D(\tau -t, \theta_{2, -t} \omega)$.
\end{lem}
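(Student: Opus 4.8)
The plan is to derive Lemma \ref{lem2} directly from Lemma \ref{lem1} by specializing the evaluation time. I would keep the very same $T = T(\tau, \omega, D) > 0$ furnished by Lemma \ref{lem1} and substitute $s = \tau - k$ into the first estimate of that lemma. With this choice the three ingredients of the bound collapse to the asserted form: the leading exponential $e^{\nu\lambda(\tau - s)}$ becomes $e^{\nu\lambda k}$, the prefactor $\tfrac{2}{\nu}\, e^{-\nu\lambda s}$ becomes $\tfrac{2}{\nu}\, e^{\nu\lambda(k-\tau)}$, and the upper limit of the integral $\int_{-\infty}^{s}$ becomes $\int_{-\infty}^{\tau - k}$. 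No new computation is involved; everything is a renaming of the free variable $s$.

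The only point that genuinely requires checking is the admissible range of parameters. Lemma \ref{lem1} is valid whenever $t \ge T$ and $s \ge \tau - t$. For the choice $s = \tau - k$ the second requirement reads $\tau - k \ge \tau - t$, that is, $t \ge k$. Hence both hypotheses of Lemma \ref{lem1} hold as soon as $t \ge \max\{T, k\}$, and in particular for all $t \ge T + k$, since $T > 0$ and $k \ge 0$ force $T + k \ge \max\{T, k\}$. This is precisely the range stated in Lemma \ref{lem2}, so the conclusion transfers without modification.

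There is no real obstacle here: Lemma \ref{lem2} is simply the evaluation of Lemma \ref{lem1} along the backward times $\tau - k$, $k \ge 0$, recorded in the form needed for the later energy-equation argument, where one wants the solution controlled at instants a fixed distance $k$ behind the current time $\tau$, uniformly over the initial time $\tau - t$. The membership $v_{\tau - t} \in D(\tau - t, \theta_{2,-t}\omega)$ and all measurability properties are inherited verbatim from Lemma \ref{lem1}, and the finiteness of the integral on the right-hand side is guaranteed by \eqref{plem1_7}.
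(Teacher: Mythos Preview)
Your proposal is correct and follows essentially the same approach as the paper: set $s=\tau-k$ in Lemma~\ref{lem1}, keep the same $T=T(\tau,\omega,D)$, and observe that $t\ge T+k$ implies both $t\ge T$ and $s\ge\tau-t$, so the first estimate of Lemma~\ref{lem1} specializes to the desired inequality.
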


\begin{proof}
Given $\tau \in \R$    and $k \ge 0$, let
$s= \tau -k$.   Let $T= T(\tau, \omega, D)$ be   the positive constant
claimed in Lemma \ref{lem1}.  If 
  $t \ge T + k$,  then   we have
  $ t\ge T$   and $ s \ge \tau -t$. 
Thus,  
the desired result follows   from Lemma \ref{lem1}. 
\end{proof}

 Next,  we prove  the $\cald$-pullback asymptotic compactness of the
 solutions  of problem \eqref{v1}-\eqref{v4}.  For this purpose, we need
 the following weak continuity of solutions in initial data, which can be established 
 by  the standard  methods  as in \cite{ros1}.
 
 \begin{lem}
 \label{lem3}
 Suppose  \eqref{poincare} holds 
 and $f \in L^2_{loc} (\R, V^*)$. Let
 $\tau \in \R$, $\omega \in \Omega$,
 $v_\tau $, $v_{\tau, n} \in  H$ for
 all $n \in \N$.
 If $v_{\tau, n}   \rightharpoonup v_\tau$ in $H$, 
  then  the solution $v$   of problem \eqref{v1}-\eqref{v4}
  has  the  properties:
  $$
  v(r, \tau, \omega, v_{\tau, n}) \rightharpoonup
  v(r, \tau, \omega, v_\tau) \quad \mbox{ in } \  H
  \ \mbox{ for   all } \  r \ge \tau,
  $$
  and
  $$
   v(\cdot,  \tau, \omega, v_{\tau, n}) \rightharpoonup
  v(\cdot, \tau, \omega, v_\tau) \quad \mbox{ in } \  L^2 ((\tau, \tau +T), V)
  \ \mbox{ for   every } \   T>0.
  $$
 \end{lem}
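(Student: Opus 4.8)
The plan is to combine uniform energy bounds on a fixed finite time interval with a local compactness argument, and then to use the uniqueness of solutions from \cite{tem1} to upgrade subsequential convergence to convergence of the whole sequence. Write $v_n(\cdot) = v(\cdot, \tau, \omega, v_{\tau,n})$ and $\bar v(\cdot) = v(\cdot,\tau,\omega,v_\tau)$. Since $v_{\tau,n}\rightharpoonup v_\tau$ in $H$, the sequence $\{v_{\tau,n}\}$ is bounded in $H$, say $\|v_{\tau,n}\|\le M$. First I would integrate the energy inequality \eqref{plem1_2} directly on $(\tau,\tau+T)$ (no Poincar\'e absorption needed on a finite interval) to obtain, for each fixed $T>0$, bounds
$$
\sup_n \ \sup_{r \in [\tau, \tau+T]} \|v_n(r)\|^2 \le C(T,M), \qquad
\sup_n \int_\tau^{\tau+T} \|Dv_n(r)\|^2\, dr \le C(T,M),
$$
so that $\{v_n\}$ is bounded in $L^\infty((\tau,\tau+T),H)\cap L^2((\tau,\tau+T),V)$. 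Next, reading \eqref{solv1} as an identity in $V^*$ and using the two-dimensional estimate $|b(u,u,\zeta)|\le c\,\|u\|\,\|Du\|\,\|\zeta\|_V$ (Ladyzhenskaya), together with $f\in L^2_{loc}(\R,V^*)$ and the local boundedness of $z(\cdot,\omega)$, I would bound the time derivatives $\{v_n'\}$ in $L^2((\tau,\tau+T),V^*)$ uniformly in $n$.

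From these bounds I can extract a subsequence (not relabeled) with $v_n \rightharpoonup \xi$ weak-$*$ in $L^\infty((\tau,\tau+T),H)$ and weakly in $L^2((\tau,\tau+T),V)$, and $v_n' \rightharpoonup \xi'$ weakly in $L^2((\tau,\tau+T),V^*)$. Passing to the limit in the linear and forcing terms of \eqref{solv1} is immediate by weak convergence. The hard part will be the nonlinear convective term, and here the unboundedness of $Q$ is precisely what prevents a global application of the Aubin--Lions lemma, since $V\hookrightarrow H$ is not compact. The remedy is to localize: since $\mathcal{V}$ is dense in $V$, it suffices to verify \eqref{solv1} for test functions $\zeta\in\mathcal{V}$, which are supported in some bounded open set $\mathcal{O}\Subset Q$. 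On $\mathcal{O}$ the embedding $H^1(\mathcal{O})\hookrightarrow L^2(\mathcal{O})$ is compact, so Aubin--Lions yields $v_n\to\xi$ strongly in $L^2((\tau,\tau+T),L^2(\mathcal{O})^2)$; this local strong convergence together with the weak $L^2(V)$ convergence is enough to pass to the limit in $\int_\tau^t z(s,\omega)^{-1}b(v_n,v_n,\zeta)\,ds$. Hence $\xi$ satisfies \eqref{solv1}; passing to the limit in the initial-data term $(v_{\tau,n},\zeta)\to(v_\tau,\zeta)$ shows $\xi(\tau)=v_\tau$, and uniqueness from \cite{tem1} identifies $\xi=\bar v$. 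Because every subsequence admits a further subsequence converging to the same limit $\bar v$, the full sequence satisfies $v_n\rightharpoonup\bar v$ in $L^2((\tau,\tau+T),V)$, which is the second assertion.

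For the pointwise-in-time weak convergence, I would exploit the uniform $V^*$-bound on $\{v_n'\}$, which makes the scalar functions $r\mapsto (v_n(r),\zeta)$ equicontinuous on $[\tau,\tau+T]$ for each fixed $\zeta\in V$, hence for each $\zeta\in H$ by density and the uniform $H$-bound. Combined with the uniform bound $\|v_n(r)\|\le C$, an Arzel\`a--Ascoli argument over a countable dense set of test functions gives $(v_n(r),\zeta)\to(\bar v(r),\zeta)$ for every $r\ge\tau$ and every $\zeta\in H$, which is exactly $v_n(r)\rightharpoonup\bar v(r)$ in $H$. Thus the single genuinely delicate point is the convective term, handled by the compact-support localization above; the remaining steps are the standard Navier--Stokes weak-compactness computations as carried out in \cite{ros1, tem1}.
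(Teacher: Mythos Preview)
Your proposal is correct and is precisely the argument the paper has in mind: the paper gives no proof of this lemma at all, merely stating that it ``can be established by the standard methods as in \cite{ros1}.'' Your outline---uniform energy and $V^*$-derivative bounds, localized Aubin--Lions via compactly supported test functions to handle the convective term on the unbounded domain, identification of the limit through uniqueness, and the equicontinuity/Arzel\`a--Ascoli step for pointwise weak convergence---is exactly Rosa's argument in \cite{ros1} transplanted to the random equation \eqref{v1}--\eqref{v4}, so there is nothing to compare.
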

 
 The next lemma   is concerned with the
  pullback asymptotic compactness of
 problem \eqref{v1}-\eqref{v4}.
 
 \begin{lem}
 \label{lem4}
Suppose  \eqref{poincare} and \eqref{fcond1} hold.
Then for every $\tau \in \R$, $\omega \in \Omega$,  $D=\{D(\tau, \omega)
: \tau \in \R,  \omega \in \Omega\}  \in \cald$ and 
$t_n \to \infty$,
 $v_{0,n}  \in D(\tau -t_n, \theta_{2, -t_n} \omega )$,  the sequence
 $v(\tau, \tau -t_n,  \theta_{2, -\tau} \omega,   v_{0,n}  ) $ 
 of solutions of problem \eqref{v1}-\eqref{v4}
   has a
   convergent
subsequence in   $H$.
\end{lem}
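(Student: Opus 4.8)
The plan is to prove this by \textbf{Ball's energy-equation method}, which upgrades a weak subsequential limit to a strong one in $H$ and thereby circumvents the lack of a compact Sobolev embedding on the unbounded domain $Q$. Throughout, abbreviate $v_n(s) = v(s, \tau - t_n, \theta_{2,-\tau}\omega, v_{0,n})$ and write $z(r) = z(r, \theta_{2,-\tau}\omega)$. First I would extract weak limits along a diagonal sequence. By Lemma \ref{lem2}, for each fixed integer $k \ge 0$ the sequence $\{v_n(\tau-k)\}$ is bounded in $H$ once $t_n \ge T+k$; hence by weak compactness of bounded sets in the Hilbert space $H$ and a diagonal argument over $k \in \N$, I pass to a subsequence (not relabeled) with $v_n(\tau-k) \rightharpoonup \xi_k$ weakly in $H$ for every $k \ge 0$, and set $\xi := \xi_0$, so that $v_n(\tau) \rightharpoonup \xi$. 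Since $H$ is Hilbert, it then suffices to prove $\limsup_n \|v_n(\tau)\| \le \|\xi\|$, because weak convergence already gives $\|\xi\| \le \liminf_n \|v_n(\tau)\|$, and the two together force $\|v_n(\tau)\| \to \|\xi\|$ and hence strong convergence.

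Next I would identify the limiting solution and record the energy equality. Fix $k$ and view $v_n(\cdot)$ on $[\tau-k,\tau]$ as the solution of \eqref{v1}--\eqref{v4} (with $\omega$ replaced by $\theta_{2,-\tau}\omega$) issuing from the datum $v_n(\tau-k)$ at time $\tau-k$. Since $v_n(\tau-k) \rightharpoonup \xi_k$, the weak continuity of Lemma \ref{lem3} yields $v_n \rightharpoonup \bar v := v(\cdot, \tau-k, \theta_{2,-\tau}\omega, \xi_k)$ weakly in $L^2((\tau-k,\tau),V)$ and $v_n(r) \rightharpoonup \bar v(r)$ in $H$ for each $r$; by uniqueness of weak limits $\bar v(\tau) = \xi$. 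Multiplying the energy identity \eqref{plem1_1} (in the form ${\frac d{dt}}\|v\|^2 = -2\nu\|Dv\|^2 + 2z(t)\langle f(\cdot,t),v\rangle$) by the integrating factor $e^{\nu\lambda t}$ and integrating on $[\tau-k,\tau]$ gives, for every solution $v$,
\[
\|v(\tau)\|^2 = e^{-\nu\lambda k}\|v(\tau-k)\|^2 - \int_{\tau-k}^\tau e^{-\nu\lambda(\tau-r)} Q(v(r))\,dr + 2\int_{\tau-k}^\tau e^{-\nu\lambda(\tau-r)} z(r)\langle f(\cdot,r),v(r)\rangle\,dr,
\]
where $Q(v) = 2\nu\|Dv\|^2 - \nu\lambda\|v\|^2$. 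By the Poincar\'e inequality \eqref{poincare}, $Q$ is a \emph{nonnegative} quadratic form on $V$.

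The decisive passage to the limit is then carried out term by term in the identity applied to $v_n$. The forcing integral converges to its $\bar v$-analogue because $e^{-\nu\lambda(\tau-\cdot)}z(\cdot)f(\cdot,\cdot)$ lies in $L^2((\tau-k,\tau),V^*)$ while $v_n \rightharpoonup \bar v$ in $L^2((\tau-k,\tau),V)$. The initial-data term is controlled by the unnormalized estimate underlying Lemma \ref{lem1} together with the temperedness \eqref{attdom1}: one finds $\limsup_n e^{-\nu\lambda k}\|v_n(\tau-k)\|^2 \le \varepsilon_k$, where the surviving contribution is the tail $\varepsilon_k = {\frac 2\nu} e^{-\nu\lambda\tau}\int_{-\infty}^{\tau-k} e^{\nu\lambda r} z^2(r)\|f(\cdot,r)\|_{V^*}^2\,dr$, which is finite by \eqref{fcond1} and tends to $0$ as $k\to\infty$. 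The hard part, which replaces compactness, is the dissipation term: since $Q \ge 0$, the functional $v \mapsto \int_{\tau-k}^\tau e^{-\nu\lambda(\tau-r)} Q(v(r))\,dr$ is a nonnegative quadratic form, hence convex and strongly continuous on $L^2((\tau-k,\tau),V)$, and therefore \emph{weakly lower semicontinuous}. Consequently $-\int e^{-\nu\lambda(\tau-\cdot)} Q(v_n)\,dr$ is weakly upper semicontinuous, so its $\limsup_n$ is dominated by the corresponding $\bar v$-integral. Collecting the three estimates and then invoking the energy identity for $\bar v$ (discarding the nonnegative term $e^{-\nu\lambda k}\|\bar v(\tau-k)\|^2$) yields $\limsup_n \|v_n(\tau)\|^2 \le \varepsilon_k + \|\xi\|^2$. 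Letting $k\to\infty$ gives $\limsup_n \|v_n(\tau)\|^2 \le \|\xi\|^2$, which, by the reduction in the first paragraph, establishes strong convergence of $v_n(\tau)$ in $H$ and proves the lemma.
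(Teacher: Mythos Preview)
Your proposal is correct and follows essentially the same approach as the paper: Ball's energy-equation method with the same reformulated energy functional $Q(v)=2\nu\|Dv\|^2-\nu\lambda\|v\|^2$ (the paper calls it $\psi$), the same diagonal extraction of weak limits at times $\tau-k$, the same use of Lemma~\ref{lem3} for weak continuity, and the same term-by-term passage to the limit via weak lower semicontinuity of the dissipation integral and the tail estimate $\varepsilon_k\to 0$. The paper's presentation differs only in notation and in that it phrases the weak lower semicontinuity by observing that $\psi$ is an equivalent norm on $V$ rather than invoking convexity of a nonnegative quadratic form.
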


\begin{proof}
It follows   from Lemma \ref{lem2} with $k =0$ that, there  exists
$T=T(\tau, \omega, D)>0$    such that   for   all $t \ge T$,
\be\label{plem4_1a1}
\| v(\tau, \tau -t, \theta_{2, -\tau} \omega, v_{\tau -t} ) \|^2
\le 1 
 + {\frac 2\nu} e^{- \nu \lambda \tau }
 \int_{-\infty}^{\tau }
 e^{\nu\lambda r}
 z^2(r, \theta_{2,-\tau} \omega) \| f(\cdot, r)\|_{V^*}^2 dr,
 \ee
 with $v_{\tau -t} \in D(\tau -t, \theta_{2, -t} \omega )$.
 Since   
 $t_n \to \infty$,  there  exists  $N_0 \in \N$ such that   
   $t _n \ge T$   for    all   $ n \ge N_0$. 
   Due to   $v_{0,n}  \in D(\tau -t_n, \theta_{2, -t_n} \omega )$,
   we get  from    \eqref{plem4_1a1}
   that   for all $n \ge   \N_0$,
 \be\label{plem4_1}
 \| v(\tau, \tau -t_n, \theta_{2, -\tau} \omega, v_{0, n} ) \|^2
\le 1 
 + {\frac 2\nu} e^{- \nu \lambda \tau }
 \int_{-\infty}^{\tau }
 e^{\nu\lambda r}
 z^2(r, \theta_{2,-\tau} \omega) \| f(\cdot, r)\|_{V^*}^2 dr.
 \ee
  By \eqref{plem4_1} there exists $\tilv \in H$  
  and a subsequence (which  is not relabeled) such that
  \be
  \label{plem4_2}
  v(\tau, \tau -t_n, \theta_{2, -\tau} \omega, v_{0, n} ) 
   \rightharpoonup \tilv \quad \mbox{  in } \ H.
   \ee
   We now prove that  the weak convergence  of \eqref{plem4_2}
   is actually   a  strong convergence,  which will complete
   the proof.
   Note    that \eqref{plem4_2}   implies    
   \be
   \label{plem4_3}
   \liminf_{n \to \infty}
    \|v(\tau, \tau -t_n, \theta_{2, -\tau} \omega, v_{0, n} ) \|
    \ge \| \tilv \|.
    \ee
    So we  only need   to show
     \be
   \label{plem4_4}
   \limsup_{n \to \infty}
    \|v(\tau, \tau -t_n, \theta_{2, -\tau} \omega, v_{0, n} ) \|
     \le  \| \tilv \|.
    \ee
    We  will establish   \eqref{plem4_4} by the method of  energy equations
    due to Ball \cite{bal1}.
    Given $k \in \N$   we have
    \be
    \label{plem4_5}
    v(\tau, \tau -t_n, \theta_{2, -\tau} \omega, v_{0, n} ) 
    = v(\tau, \tau - k, \theta_{2, -\tau} \omega, \  
        v(\tau-k, \tau -t_n, \theta_{2, -\tau} \omega, v_{0, n} )    ).
        \ee
        For each $k$, let $N_k$   be large enough such that
        $t_n \ge T+ k$   for all $ n \ge N_k$.
       Then it   follows   from Lemma  \ref{lem2}    that
        for  $n \ge N_k $,
         $$
\| v(\tau -k , \tau -t_n,  \theta_{2, -\tau} \omega, v_{0,n}  ) \|^2 
 \le 
 e^{\nu \lambda k}
 + {\frac 2\nu} e^{\nu \lambda (k -\tau)}
 \int_{-\infty}^{\tau -k}
 e^{\nu\lambda r}
 z^2(r, \theta_{2,-\tau} \omega) \| f(\cdot, r)\|_{V^*}^2 dr,
 $$
 which     shows   that, for each fixed $k \in \N$,   the sequence 
 $ v(\tau -k , \tau -t_n,  \theta_{2, -\tau} \omega, v_{0,n}  ) $
 is bounded in $H$. By a diagonal process,   one can  find 
 a  subsequence (which we do not relabel) 
and a point 
 $\tilv_k \in H$  for   each $k \in \N$      such that
 \be
 \label{plem4_6}
  v(\tau -k , \tau -t_n,  \theta_{2, -\tau} \omega, v_{0,n}  )
  \rightharpoonup \tilv_k
  \quad \mbox{ in } \ H.
  \ee
  By \eqref{plem4_5}-\eqref{plem4_6}   and Lemma \ref{lem3}
  we get that
  for  each $k \in \N$, 
  \be
  \label{plem4_10}
    v(\tau, \tau -t_n, \theta_{2, -\tau} \omega, v_{0, n} ) 
    \rightharpoonup
    v(\tau, \tau - k, \theta_{2, -\tau} \omega,  \tilv_k ) 
    \quad \mbox{ in } \ H,
    \ee
    and
    \be
    \label{plem4_11}
    v(\cdot, \tau - k, \theta_{2, -\tau} \omega, \  
        v(\tau-k, \tau -t_n, \theta_{2, -\tau} \omega, v_{0, n} )    )
        \rightharpoonup
        v(\cdot, \tau - k, \theta_{2, -\tau} \omega,   \tilv_k )
        \quad \mbox{in } \ L^2((\tau -k,  \tau), V) .
        \ee
By \eqref{plem4_2}  and \eqref{plem4_10}   we have
\be
\label{plem4_12}
v(\tau, \tau - k, \theta_{2, -\tau} \omega,  \tilv_k ) =\tilv.
\ee
Note that \eqref{plem1_1}  implies  that
  \be\label{plem4_20}
 {\frac d{dt}} \| v\|^2  +
\nu \lambda  \| v \|^2 + \psi (v)
= 2 z(t, \omega) \langle f(\cdot, t), v \rangle,
\ee  
where $\psi$   is a   functional on $V$  given   by 
$$ \psi (v)  = 2\nu \| Dv \|^2 - \nu \lambda \| v \|^2,
\quad \mbox{  for  all } \ v \in V.
$$
By \eqref{poincare}   we see   that
$$ \nu \| D v \|^2  \le \psi (v) \le 2 \nu \| Dv \|^2,
\quad \mbox{  for  all } \ v \in V.
$$
This indicates    that $\psi (\cdot) $ is an equivalent norm
of $V$.  It   follows   from \eqref{plem4_20} that
for each $\omega \in \Omega$, $s \in \R$  and $\tau \ge s$,
$$
\| v(\tau, s,  \omega, v_s) \|^2
=e^{\nu\lambda (s -\tau)} \| v_s \|^2
-\int_s^\tau e^{\nu \lambda (r-\tau)} \psi (v(r, s, \omega, v_s))  dr
$$
\be\label{plem4_21}
+2 \int_s^\tau  e^{\nu \lambda (r-\tau)} 
z(r, \omega) \langle f(\cdot, r), v(r,s,\omega, v_s )\rangle dr.
\ee
By  \eqref{plem4_12} and \eqref{plem4_21} we find that
$$
\| \tilv\|^2 = 
  \| v(\tau, \tau -k,  \theta_{2, -\tau} \omega,  \tilv_k) \|^2
=e^{ - \nu\lambda  k } \|  \tilv_k \|^2
- \int_{\tau -k}^\tau e^{\nu \lambda (r-\tau)}
 \psi (v(r, \tau -k, \theta_{2, -\tau} \omega,  \tilv_k ))  dr
 $$
\be\label{plem4_22}
+2 \int_{\tau -k}^\tau  e^{\nu \lambda (r-\tau)} 
z(r,  \theta_{2, -\tau} \omega)
 \langle f(\cdot, r), v(r, \tau -k,  \theta_{2, -\tau}\omega,  \tilv_k )\rangle dr.
\ee
    Similarly,  by  \eqref{plem4_5}
    and \eqref{plem4_21} we obtain  that 
  $$
    \| v(\tau, \tau -t_n, \theta_{2, -\tau} \omega, v_{0, n} )  \|^2
    = \|v(\tau, \tau - k, \theta_{2, -\tau} \omega, \  
        v(\tau-k, \tau -t_n, \theta_{2, -\tau} \omega, v_{0, n} )    ) \|^2
        $$
   $$      =e^{ - \nu\lambda  k } 
   \|  v(\tau-k, \tau -t_n, \theta_{2, -\tau} \omega, v_{0, n} )    \|^2
   $$
   $$
- \int_{\tau -k}^\tau e^{\nu \lambda (r-\tau)}
 \psi (v(r, \tau -k, \theta_{2, -\tau} \omega, 
  v(\tau-k, \tau -t_n, \theta_{2, -\tau} \omega, v_{0, n} ) ) )  dr
 $$
\be\label{plem4_23}
+2 \int_{\tau -k}^\tau  e^{\nu \lambda (r-\tau)} 
z(r,  \theta_{2, -\tau} \omega)
 \langle f(\cdot, r), 
 v(r, \tau -k,  \theta_{2, -\tau}\omega,  
  v(\tau-k, \tau -t_n, \theta_{2, -\tau} \omega, v_{0, n} )  )\rangle dr.
\ee
We now  consider the limit of each  term on the right-hand
side of \eqref{plem4_23}   as $n \to \infty$.
For the first  term, 
by \eqref{plem1_5} with $s = \tau -k$ and $t =t_n$ 
 we get  that
$$
e^{- \nu\lambda k}
\| v(\tau -k, \tau -t_n, \theta_{2, -\tau} \omega, v_{0,n} )\|^2
$$
 \be
 \label{plem4_24}
\le
e^{- \nu\lambda t_n }   \| v_{0,n} \|^2
+{\frac 2\nu} e^{- \nu\lambda  \tau }
\int_{-\infty} ^{\tau -k}
e^{\nu\lambda r} z^2(r,  \theta_{2, -\tau} \omega) \| f(\cdot, r)\|^2_{V^*} dr.
\ee
Since $v_{0,n} \in D(\tau -t_n, \theta_{2, -t_n} \omega)$ we have
$$
e^{- \nu\lambda t_n }   \| v_{0,n} \|^2
\le  e^{- \nu\lambda t_n }   \| D(\tau -t_n, \theta_{2, -t_n} \omega)  \|^2
\to 0 \quad \mbox{as } \ n \to \infty,
$$
which   along with \eqref{plem4_24} shows   that
$$
\limsup_{n \to \infty} e^{- \nu\lambda k}
\| v(\tau -k, \tau -t_n, \theta_{2, -\tau} \omega, v_{0,n} )\|^2
$$
\be
 \label{plem4_30}
\le
{\frac 2\nu} e^{- \nu\lambda  \tau }
\int_{-\infty} ^{\tau -k}
e^{\nu\lambda r} z^2(r,  \theta_{2, -\tau} \omega) \| f(\cdot, r)\|^2_{V^*} dr.
\ee
By \eqref{plem4_11} we find   that
$$
\lim_{n \to \infty}
\int_{\tau -k}^\tau  e^{\nu \lambda (r-\tau)} 
z(r,  \theta_{2, -\tau} \omega)
 \langle f(\cdot, r), 
 v(r, \tau -k,  \theta_{2, -\tau}\omega,  
  v(\tau-k, \tau -t_n, \theta_{2, -\tau} \omega, v_{0, n} )  )\rangle dr
  $$
  \be
  \label{plem4_40}
  =
  \int_{\tau -k}^\tau  e^{\nu \lambda (r-\tau)} 
z(r,  \theta_{2, -\tau} \omega)
 \langle f(\cdot, r), 
 v(r, \tau -k,  \theta_{2, -\tau}\omega,  \tilv_k ) \rangle dr,
 \ee
 and
 $$
 \liminf_{n \to \infty}
 \int_{\tau -k}^\tau e^{\nu \lambda (r-\tau)}
 \psi (v(r, \tau -k, \theta_{2, -\tau} \omega, 
  v(\tau-k, \tau -t_n, \theta_{2, -\tau} \omega, v_{0, n} ) ) )  dr
  $$
  \be\label{plem4_41}
  \ge
   \int_{\tau -k}^\tau e^{\nu \lambda (r-\tau)}
 \psi (v(r, \tau -k, \theta_{2, -\tau} \omega, 
   \tilv_k  ))   dr.
   \ee
   Note  that \eqref{plem4_41} implies   that
    $$
  \limsup _{n \to \infty} - 
 \int_{\tau -k}^\tau e^{\nu \lambda (r-\tau)}
 \psi (v(r, \tau -k, \theta_{2, -\tau} \omega, 
  v(\tau-k, \tau -t_n, \theta_{2, -\tau} \omega, v_{0, n} ) ) )  dr
  $$
  \be\label{plem4_42}
  \le -
   \int_{\tau -k}^\tau e^{\nu \lambda (r-\tau)}
 \psi (v(r, \tau -k, \theta_{2, -\tau} \omega, 
   \tilv_k  ))   dr.
   \ee
   Taking   the limit of \eqref{plem4_23} as $n \to \infty$, by
   \eqref{plem4_30}, \eqref{plem4_40} and \eqref{plem4_42} we
   obtain  that
   $$
   \limsup _{n \to \infty}
      \| v(\tau, \tau -t_n, \theta_{2, -\tau} \omega, v_{0, n} )  \|^2
      \le
{\frac 2\nu} e^{- \nu\lambda  \tau }
\int_{-\infty} ^{\tau -k}
e^{\nu\lambda r} z^2(r,  \theta_{2, -\tau} \omega) \| f(\cdot, r)\|^2_{V^*} dr
$$
$$
  - \int_{\tau -k}^\tau e^{\nu \lambda (r-\tau)}
 \psi (v(r, \tau -k, \theta_{2, -\tau} \omega,  \tilv_k ))  dr
 $$
\be\label{plem4_50}
+2 \int_{\tau -k}^\tau  e^{\nu \lambda (r-\tau)} 
z(r,  \theta_{2, -\tau} \omega)
 \langle f(\cdot, r), v(r, \tau -k,  \theta_{2, -\tau}\omega,  \tilv_k )\rangle dr.
\ee
It follows   from \eqref{plem4_22}  and \eqref{plem4_50}
that
 \be
 \label{plem4_60}
   \limsup _{n \to \infty}
      \| v(\tau, \tau -t_n, \theta_{2, -\tau} \omega, v_{0, n} )  \|^2
      \le \| \tilv \|^2    + 
{\frac 2\nu} e^{- \nu\lambda  \tau }
\int_{-\infty} ^{\tau -k}
e^{\nu\lambda r} z^2(r,  \theta_{2, -\tau} \omega) \| f(\cdot, r)\|^2_{V^*} dr.
\ee
Let $ k \to \infty$ in  \eqref{plem4_60} to yield
 \be
 \label{plem4_61}
   \limsup _{n \to \infty}
      \| v(\tau, \tau -t_n, \theta_{2, -\tau} \omega, v_{0, n} )  \|^2
      \le \| \tilv \|^2  .
      \ee
      By \eqref{plem4_2}-\eqref{plem4_3}   and \eqref{plem4_61}
      we find   that
      $$
      \lim _{n \to \infty}
       v(\tau, \tau -t_n, \theta_{2, -\tau} \omega, v_{0, n} )   
      =    \tilv  \quad \mbox {in } H .
      $$
      This completes    the proof.
\end{proof}

\section{Existence of  Pullback  Attractors }
 \setcounter{equation}{0}
 
 In   this  section,  we establish   the existence of  $\cald$-pullback attractors
 for the Navier-Stokes equations \eqref{nse1}-\eqref{nse2}. 
 Based on  the uniform
 estimates  on the   solutions   of problem \eqref{v1}-\eqref{v4},  we first show
 that the cocycle  $\Phi$    associated  with the  stochastic  system 
 \eqref{nse1}-\eqref{nse4}  has a  measurable $\cald$-pullback  absorbing   set
 in $H$, and then prove  the $\cald$-pullback   asymptotic compactness of $\Phi$.

\begin{lem}
\label{lematt1}
 Suppose  \eqref{poincare} and \eqref{fcond1} hold.
Then for every $\tau \in \R$, $\omega \in \Omega$   and $D=\{D(\tau, \omega)
: \tau \in \R,  \omega \in \Omega\}  \in \cald$,
 there exists  $T=T(\tau, \omega,  D)>0$ such that 
   for all $t \ge T $, 
 the solution
 $u$ of  problem  \eqref{nse1}-\eqref{nse4}  with $\omega$ replaced by
 $\theta_{2, -\tau} \omega$  satisfies
$$
\| u(\tau  , \tau -t,  \theta_{2, -\tau} \omega, u_{\tau -t}  ) \|^2 
$$
$$
 \le 
 z^{-2} (\tau, \theta_{2, -\tau} \omega )
 + {\frac 2\nu}z^{-2} (\tau, \theta_{2, -\tau} \omega )
 \int_{-\infty}^{\tau }
 e^{\nu\lambda (r-\tau)}
 z^2(r, \theta_{2,-\tau} \omega) \| f(\cdot, r)\|_{V^*}^2 dr,
 $$
 where $u_{\tau -t}\in D(\tau -t, \theta_{2, -t} \omega)$.
\end{lem}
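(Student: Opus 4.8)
The plan is to reduce the estimate for the stochastic solution $u$ to the estimate for the transformed deterministic solution $v$ already obtained in Lemma \ref{lem2}, exploiting the explicit change of variables \eqref{vu}. First I would record the pointwise relation between $u$ and $v$. Applying \eqref{vu} with $\omega$ replaced by $\theta_{2, -\tau} \omega$ and initial time $\tau - t$, the initial data transform as $v_{\tau - t} = z(\tau - t, \theta_{2, -\tau} \omega)\, u_{\tau - t}$, and evaluating at the running time $\tau$ gives $v(\tau, \tau - t, \theta_{2, -\tau} \omega, v_{\tau - t}) = z(\tau, \theta_{2, -\tau} \omega)\, u(\tau, \tau - t, \theta_{2, -\tau} \omega, u_{\tau - t})$. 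Solving for $u$ and taking the $H$-norm squared isolates the scalar factor appearing in the claimed bound:
\be
\label{uvrel}
\| u(\tau, \tau - t, \theta_{2, -\tau} \omega, u_{\tau - t}) \|^2
= z^{-2}(\tau, \theta_{2, -\tau} \omega)\,
\| v(\tau, \tau - t, \theta_{2, -\tau} \omega, v_{\tau - t}) \|^2 .
\ee

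Next I would apply Lemma \ref{lem2} with $k = 0$ (equivalently Lemma \ref{lem1} with $s = \tau$) to the second factor in \eqref{uvrel}. This yields, for all $t \ge T$, the bound $\| v(\tau, \tau - t, \theta_{2, -\tau} \omega, v_{\tau - t}) \|^2 \le 1 + \frac{2}{\nu}\, e^{-\nu\lambda\tau} \int_{-\infty}^{\tau} e^{\nu\lambda r} z^2(r, \theta_{2, -\tau} \omega) \| f(\cdot, r) \|_{V^*}^2\, dr$. Multiplying through by $z^{-2}(\tau, \theta_{2, -\tau} \omega)$ and rewriting $e^{-\nu\lambda\tau} e^{\nu\lambda r} = e^{\nu\lambda(r - \tau)}$ reproduces the two terms of the statement verbatim, which finishes the argument once the application of Lemma \ref{lem2} is justified.

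The one point that requires care is precisely that justification. Lemma \ref{lem2} is stated for an initial datum $v_{\tau - t}$ lying in a tempered family $D \in \cald$, whereas here $v_{\tau - t} = z(\tau - t, \theta_{2, -\tau} \omega)\, u_{\tau - t}$ is the $z$-rescaling of an element $u_{\tau - t} \in D(\tau - t, \theta_{2, -t} \omega)$, which need not itself lie in $D$. Inspecting the proof of Lemma \ref{lem1}, the sole role of the membership $v_{\tau - t} \in D$ is to ensure that the initial-data contribution $e^{-\nu\lambda t}\| v_{\tau - t}\|^2$ tends to $0$ as $t \to \infty$, so that \eqref{plem1_8} holds for large $t$. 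Hence I would only need to verify that $e^{-\nu\lambda t}\, z^2(\tau - t, \theta_{2, -\tau} \omega)\, \| u_{\tau - t}\|^2 \to 0$. Writing $z^2(\tau - t, \theta_{2, -\tau} \omega) = e^{-2\alpha\, \theta_{2, -\tau}\omega(\tau - t)}$ and using \eqref{aspomega}, the factor $z^2$ grows only subexponentially in $t$; combined with the temperedness \eqref{attdom1} of $D$, which forces $\| u_{\tau - t}\|^2 \le \| D(\tau - t, \theta_{2, -t} \omega)\|^2$ to grow subexponentially as well, the exponentially decaying weight $e^{-\nu\lambda t}$ with $\nu\lambda > 0$ dominates and drives the product to zero. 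Equivalently, the rescaled family is again tempered, so Lemma \ref{lem2} applies exactly as stated. This is the only mild subtlety; the rest is the algebraic bookkeeping recorded above, and I expect that verifying this decay of the rescaled initial term is the main (and only genuine) obstacle.
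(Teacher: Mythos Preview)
Your proposal is correct and follows essentially the same approach as the paper: the paper also reduces to Lemma~\ref{lem2} with $k=0$ via \eqref{vu}, and handles the subtlety you identify by explicitly defining the rescaled family $\tilde{D}(\tau,\omega)=\{v\in H:\|v\|\le |z(\tau,\theta_{2,-\tau}\omega)|\,\|D(\tau,\omega)\|\}$ and checking it is tempered, exactly as you outline in your final paragraph. Your direct verification of the decay $e^{-\nu\lambda t}z^2(\tau-t,\theta_{2,-\tau}\omega)\|u_{\tau-t}\|^2\to 0$ and the paper's formal construction of $\tilde D\in\cald$ amount to the same computation.
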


\begin{proof}
Given $D=\{D(\tau, \omega): \tau \in \R, \omega \in \Omega\}
 \in \cald$,  for each $\tau \in \R$   and
 $\omega \in \Omega$,  denote by
 \be
 \label{plematt1_1}
{\tilde{D}}(\tau, \omega)
 = \{ v \in H :  \| v \|
 \le |z(\tau, \theta_{2, -\tau} \omega)| \ \| D(\tau, \omega) \| \}.
 \ee
 Let ${\tilde{D}}$  be a  family
 corresponding to $D$ which  consists of    
  the sets given by \eqref{plematt1_1}, i.e.,
 \be\label{plematt1_2}
 {\tilde{D}} = 
\{ {\tilde{D}}(\tau, \omega) : 
  {\tilde{D}}(\tau, \omega) \mbox{ is    defined by }
    \eqref{plematt1_1},  \tau \in \R,  \omega \in \Omega \}.
\ee
We now  prove  
${\tilde{D}}$  is tempered in $H$
 for  $D \in \cald$.
Given  $c>0$, by \eqref{aspomega} we find that
for  each $\omega \in \Omega$, 
there   exists $R>0$ such    that   for   all  
$r \ge R$,
\be
\label{plematt1_3}
| -\alpha \omega (-r) | \le {\frac 12} c r.
\ee
Since $D\in \cald$,  we get  from  \eqref{plematt1_3}  
that
$$
e^{-cr} \| {\tilde{D}} (\tau -r, \theta_{2,-r} \omega ) \|
= 
e^{-cr} |z(\tau -r, \theta_{2, -\tau} \omega )| \ 
\| D  (\tau -r, \theta_{2,-r} \omega ) \|
$$
$$
\le e^{\alpha \omega (-\tau) } 
e^{- {\frac 12} cr}  
\| D  (\tau -r, \theta_{2,-r} \omega ) \|
\to 0,
\quad \mbox{  as } \  r \to \infty,
$$
which  shows that ${\tilde{D}} \in \cald$. 
Since $u_{\tau -t}\in D(\tau -t, \theta_{2, -t} \omega)$, by 
\eqref{vu} we  know  that
$$ \| v_{\tau -t} \|
= \| z(\tau -t, \theta_{2, -\tau} \omega ) \ u_{\tau -t} \|
\le
|  z(\tau -t, \theta_{2, -\tau} \omega )| 
 \ \| D(\tau -t, \theta_{2, -t} \omega)\|,
$$
which along with \eqref{plematt1_1} implies  that
$v_{\tau -t} \in {\tilde{D}} (\tau -t, \theta_{2, -t} \omega )$.
Since ${\tilde{D}} $  is tempered, it follows   from Lemma \ref{lem2} with
$k=0$   that there  exists  $T= T(\tau, \omega, D)>0$   such that
for all $ t \ge T$,
$$
\| v(\tau  , \tau -t,  \theta_{2, -\tau} \omega, v_{\tau -t}  ) \|^2 
 \le  1
 + {\frac 2\nu} 
 \int_{-\infty}^{\tau }
 e^{\nu\lambda (r-\tau)}
 z^2(r, \theta_{2,-\tau} \omega) \| f(\cdot, r)\|_{V^*}^2 dr,
 $$
 which along with \eqref{vu} completes   the proof.
\end{proof}

 \begin{lem}
 \label{lematt2}
 Suppose  \eqref{poincare}  and   \eqref{fcond2}  hold.
 Then the continuous cocycle $\Phi$ associated with
 problem \eqref{nse1}-\eqref{nse4} has a closed measurable
 $\cald$-pullback absorbing set
 $K =\{ K(\tau, \omega): \tau \in \R,  \omega \in \Omega\}$
 $\in \cald$. 
\end{lem}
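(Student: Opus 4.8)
The plan is to let $K(\tau,\omega)$ be the closed ball in $H$ centred at the origin whose radius $R(\tau,\omega)$ is the square root of the right-hand side of the estimate in Lemma~\ref{lematt1}, that is,
$$
R^2(\tau,\omega)=z^{-2}(\tau,\theta_{2,-\tau}\omega)\Big(1+{\frac 2\nu}\int_{-\infty}^{\tau}e^{\nu\lambda(r-\tau)}z^2(r,\theta_{2,-\tau}\omega)\,\|f(\cdot,r)\|_{V^*}^2\,dr\Big).
$$
Since \eqref{fcond2} implies \eqref{fcond1}, the integral is finite by \eqref{plem1_7}, so $R(\tau,\omega)$ is well defined and $K(\tau,\omega)$ is a nonempty closed subset of $H$. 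I then verify, in turn, the pullback absorbing property, the membership $K\in\cald$, the measurability, and record closedness.

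For the absorbing property I would first rewrite Lemma~\ref{lematt1}, which is phrased for $v$ and hence for $u=z^{-1}v$, in terms of the cocycle. From \eqref{nsephi} and the group law $\theta_{2,s}\circ\theta_{2,t}=\theta_{2,s+t}$ one gets the identity $\Phi(t,\tau-t,\theta_{2,-t}\omega,u_{\tau-t})=u(\tau,\tau-t,\theta_{2,-\tau}\omega,u_{\tau-t})$. Thus Lemma~\ref{lematt1} states precisely that for every $D\in\cald$ there is $T=T(\tau,\omega,D)>0$ such that $\|\Phi(t,\tau-t,\theta_{2,-t}\omega,u_{\tau-t})\|\le R(\tau,\omega)$ for all $t\ge T$ whenever $u_{\tau-t}\in D(\tau-t,\theta_{2,-t}\omega)$. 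This is the inclusion \eqref{abs1}, so this step is bookkeeping once the identity is in hand.

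The substantive step is to show $K\in\cald$, i.e.\ that $R$ satisfies \eqref{attdom1}; this is exactly where \eqref{fcond2}, rather than merely \eqref{fcond1}, is used. Replacing $(\tau,\omega)$ by $(\tau-r,\theta_{2,-r}\omega)$ and using $\theta_{2,-(\tau-r)}\theta_{2,-r}=\theta_{2,-\tau}$, the two $z$-factors in the second term combine into $z^{-2}(\tau-r,\theta_{2,-\tau}\omega)z^2(s,\theta_{2,-\tau}\omega)=e^{2\alpha(\omega(-r)-\omega(s-\tau))}$. After the shift $s=\sigma+\tau-r$ the second term of $e^{-cr}R^2(\tau-r,\theta_{2,-r}\omega)$ is a constant times
$$
e^{-cr}\int_{-\infty}^{0}e^{\nu\lambda\sigma}\,e^{2\alpha(\omega(-r)-\omega(\sigma-r))}\,\|f(\cdot,\sigma+\tau-r)\|_{V^*}^2\,d\sigma.
$$
The hard part is to control the increment $\omega(-r)-\omega(\sigma-r)$ uniformly for $\sigma\le0$. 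Given $\varepsilon>0$, \eqref{aspomega} yields $|\omega(-r)|\le\varepsilon r$ and $|\omega(\sigma-r)|\le\varepsilon(r-\sigma)$ for $r$ large (uniformly in $\sigma\le0$), whence $2\alpha(\omega(-r)-\omega(\sigma-r))\le 4|\alpha|\varepsilon r-2|\alpha|\varepsilon\sigma$, so $e^{2\alpha(\omega(-r)-\omega(\sigma-r))}\le e^{4|\alpha|\varepsilon r}e^{-2|\alpha|\varepsilon\sigma}$. Choosing $\varepsilon\le(\nu\lambda-\delta)/(2|\alpha|)$ merges $e^{-2|\alpha|\varepsilon\sigma}$ with $e^{\nu\lambda\sigma}$ into a factor $\le e^{\delta\sigma}$ for $\sigma\le0$, reducing the term to $\frac2\nu e^{(4|\alpha|\varepsilon-c)r}\int_{-\infty}^{0}e^{\delta\sigma}\|f(\cdot,\sigma+\tau-r)\|_{V^*}^2\,d\sigma$. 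By \eqref{fcond2} (with $\rho=\tau-r\to-\infty$) this integral is $o(e^{\tilde c r})$ for every $\tilde c>0$; choosing first $\varepsilon$ with $4|\alpha|\varepsilon<c/2$ and then $\tilde c<c/2$ forces the product to $0$. (When $\alpha=0$ the factor $z$ is constant and one uses \eqref{fcond2} directly.) The first term $e^{-cr}z^{-2}(\tau-r,\theta_{2,-\tau}\omega)=e^{-cr}e^{2\alpha\omega(-r)}e^{-2\alpha\omega(-\tau)}$ tends to $0$ by \eqref{aspomega} alone. Hence $R$ satisfies \eqref{attdom1} and $K\in\cald$.

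Finally, each $K(\tau,\omega)$ is a nonempty closed ball, so closedness and nonemptiness are immediate, and $\omega\mapsto d(x,K(\tau,\omega))=\max\{0,\|x\|-R(\tau,\omega)\}$ is measurable for each $x$ once $\omega\mapsto R(\tau,\omega)$ is measurable. The latter holds because $\omega\mapsto z(s,\theta_{2,-\tau}\omega)=e^{-\alpha(\omega(s-\tau)-\omega(-\tau))}$ is continuous, hence $\calf$-measurable, for each fixed $s$, and the integral over $(-\infty,\tau)$ is a pointwise limit of integrals of jointly measurable integrands and so is measurable; measurability then holds a fortiori with respect to the $P$-completion of $\calf$. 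Collecting these facts, $K$ is a closed measurable $\cald$-pullback absorbing set belonging to $\cald$, as required. I expect the temperedness step to be the main obstacle, precisely because it forces the use of the reinforced hypothesis \eqref{fcond2}.
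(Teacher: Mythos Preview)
Your proposal is correct and follows essentially the same route as the paper: the absorbing ball is defined via the bound of Lemma~\ref{lematt1}, the absorption is read off from that lemma via the identity $\Phi(t,\tau-t,\theta_{2,-t}\omega,\cdot)=u(\tau,\tau-t,\theta_{2,-\tau}\omega,\cdot)$, and temperedness is obtained by combining the sublinear growth \eqref{aspomega} with \eqref{fcond2} after the same shift $s=\sigma+\tau-r$. The only cosmetic differences are that the paper works with $\|K\|$ rather than $R^2$ and packages the two smallness requirements on $\varepsilon$ into the single choice $\varepsilon=\min\{\nu\lambda-\delta,\tfrac12 c\}$, whereas you impose them sequentially; your more explicit measurability argument is a welcome addition.
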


\begin{proof}
Given $\tau \in \R$    and $\omega \in \Omega$,   denote by 
  \be
 \label{plematt2_1}
 K(\tau, \omega) = \{ u\in H : \|u \|^2
 \le  M(\tau, \omega) \}, 
 \ee
   where
 $M(\tau, \omega)$ is  given by
  \be
 \label{plematt2_2}
 M(\tau, \omega) = z^{-2} (\tau, \theta_{2, -\tau} \omega )
 + {\frac 2\nu}z^{-2} (\tau, \theta_{2, -\tau} \omega )
 \int_{-\infty}^{\tau }
 e^{\nu\lambda (r-\tau)}
 z^2(r, \theta_{2,-\tau} \omega) \| f(\cdot, r)\|_{V^*}^2 dr.
\ee 
Since  for each $\tau \in \R$,  $M(\tau, \cdot): \Omega \to \R$
is   $(\calf, \calb(\R))$-measurable,   we know that
$K(\tau, \cdot): \Omega \to 2^H$ is a  measurable set-valued mapping.
It follows  from Lemma \ref{lematt1} that,  for each
$\tau \in \R $, $\omega \in \Omega$   and $D \in \cald$, 
  there  exists $T=T(\tau, \omega, D) >0$  such that   for   all
  $t \ge T$,
  $$
  \Phi (t, \tau-t, \theta_{2, -t} \omega, D(\tau -t, \theta_{2, -t} \omega ))
  =u (\tau,  \tau-t, \theta_{2, -\tau} \omega, D(\tau -t, \theta_{2, -t} \omega ))
  \subseteq K(\tau, \omega).
  $$
  Therefore,  $K =\{ K(\tau, \omega): \tau \in \R, \omega \in \Omega \}$
  will be a closed measurable  $\cald$-pullback absorbing set of $\Phi$ in $H$
  if one can show   that $K$ belongs to $\cald$. 
  For each $\tau \in \R$, $\omega \in \Omega$ and $r>0$,
  by \eqref{plematt2_1}   we have
   $$ \| K(\tau -  r, \theta_{2, -r} \omega) \|
  \le  {\frac 1{z(\tau -r, \theta_{2,-\tau} \omega)}}
  \sqrt{
   1+ {\frac 2\nu} \int_{-\infty} ^{\tau -r}
   e^{\nu\lambda (s -\tau + r)} z^2(s, \theta_{2, -\tau} \omega)
   \| f(\cdot, s ) \|^2_{V^*} ds
  }
  $$
$$
  \le e^{-\alpha \omega (-\tau)} 
  e^{\alpha \omega (-r)}
  \sqrt{
   1+ {\frac 2\nu} \int_{-\infty} ^{0}
   e^{\nu\lambda s } z^2(s+\tau -r, \theta_{2, -\tau} \omega)
   \| f(\cdot, s+\tau -r ) \|^2_{V^*} ds
  }
  $$
   \be\label{plematt2_3}
   \le e^{-\alpha \omega (-\tau)} 
  e^{\alpha \omega (-r)}
   \left ( 1+
  \sqrt{
   {\frac 2\nu} \int_{-\infty} ^{0}
   e^{(\nu\lambda-\delta) s }
    z^2(s+\tau -r , \theta_{2, -\tau} \omega)
   e^{\delta s} \| f(\cdot, s+\tau - r ) \|^2_{V^*} ds
  } \ 
  \right ).
  \ee
  Let $c$ be an arbitrary positive number
  and $\varepsilon 
  =\min \{ \nu\lambda -\delta,  \ {\frac 12} c\}$. 
  By \eqref{aspomega} we see   that  there
  exists $N_1>0$  such that  
  \be\label{plematt2_5}
  | -2 \alpha  \ \omega (p) |
  \le -\varepsilon p \quad
  \mbox{ for all }  \ p \le -N_1.
  \ee
  Let $s \le 0$   and $r \ge N_1$. Then
  $p = s-r \le  -N_1$ and hence 
  it follows  from \eqref{plematt2_5}
  that
   \be\label{plematt2_6}
   -2 \alpha  \ \omega (s-r) 
  \le -\varepsilon  (s-r) ,
  \quad  \mbox{ for  all } \ 
  s\le 0 \ \mbox{ and }  \ r \ge N_1 .
  \ee
  By \eqref{plematt2_6}   we have,
  for  all $s \le 0$   and $r \ge N_1$, 
  \be
  \label{plematt2_7}
  e^{(\nu\lambda -\delta) s} 
  z^2(s+ \tau -r, \theta_{2, -\tau} \omega)
  \le
  e^{(\nu\lambda -\delta) s} e^{2 \alpha  \omega (-\tau)}
  e^{ -2 \alpha \omega (s-r)}
  \le e^{ 2 \alpha \omega (-\tau)} e^{\varepsilon r}.
  \ee
  From \eqref{plematt2_3},
  \eqref{plematt2_5}  and \eqref{plematt2_7} we get
  that,  for all $r \ge   N_1$,
  $$ \| K(\tau -  r, \theta_{2, -r} \omega) \|
  \le
  e^{\varepsilon r -\alpha \omega (-\tau) }
  +\sqrt{{\frac 2\nu}}  e^{ {\frac 32} \varepsilon r}
  \left (
  \int_{-\infty}^0
  e^{\delta s} \| f(\cdot, s+\tau - r ) \|^2_{V^*} ds
  \right )^{\frac 12} 
  $$
 \be\label{plematt2_10}
   \le
  e^{{\frac 12 } c r -\alpha \omega (-\tau) }
  +\sqrt{{\frac 2\nu}}  e^{ {\frac 34} c r}
  \left (
  \int_{-\infty}^0
  e^{\delta s} \| f(\cdot, s+\tau - r ) \|^2_{V^*} ds
  \right )^{\frac 12},
 \ee
  where we have used the fact $\varepsilon \le {\frac 12} c$.
  It follows   from \eqref{plematt2_10}   that,   for  all 
  $r \ge N_1$,
 $$
  e^{-c  r} \| K(\tau -  r, \theta_{2, -r} \omega) \|
   \le
  e^{-{\frac 12 } c r -\alpha \omega (-\tau) }
  +\sqrt{{\frac 2\nu}}  e^{- {\frac 14} c r}
  \left (
  \int_{-\infty}^0
  e^{\delta s} \| f(\cdot, s+\tau - r ) \|^2_{V^*} ds
  \right )^{\frac 12}
  $$
 $$
 \le
  e^{-{\frac 12 } c r -\alpha \omega (-\tau) }
  +\sqrt{{\frac 2\nu}}  e^{- {\frac 14} c \tau}
  \left ( e^{{\frac 12} c (\tau -r)}
  \int_{-\infty}^0
  e^{\delta s} \| f(\cdot, s+\tau - r ) \|^2_{V^*} ds
  \right )^{\frac 12},
  $$
 which along with \eqref{fcond2} shows   that
 for every positive constant $c$,
 $$
 \lim_{r \to \infty}
 e^{-c  r} \| K(\tau -  r, \theta_{2, -r} \omega) \|
 =0,
 $$
 and hence 
 $K = \{ K(\tau, \omega): \tau \in \R, \omega \in \Omega \}$
 is tempered.  This completes   the proof.
  \end{proof}

  We now prove   the $\cald$-pullback   asymptotic 
  compactness of solutions of the stochastic   equations
  \eqref{nse1}-\eqref{nse2}.

 \begin{lem}
 \label{lematt3}
 Suppose  \eqref{poincare}  and   \eqref{fcond2}  hold.
 Then the continuous cocycle $\Phi$ associated with
 problem \eqref{nse1}-\eqref{nse4}  
  is $\cald$-pullback
 asymptotically compact  in $H$,
that is, for  every $\tau \in \R$, $\omega \in \Omega$, 
 $D=\{D(\tau, \omega): \tau \in \R, \omega \in \Omega \}$
 $  \in \cald$,
and $t_n \to \infty$,
 $u_{0,n}  \in D(\tau -t_n, \theta_{2, -t_n} \omega )$,  the sequence
 $\Phi(t_n, \tau -t_n,  \theta_{2, -t_n} \omega,   u_{0,n}  ) $   has a
   convergent
subsequence in $H $.
\end{lem}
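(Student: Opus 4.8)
The plan is to reduce the asymptotic compactness of $\Phi$ to that of the transformed problem \eqref{v1}-\eqref{v4}, for which the hard analytic work was already carried out in Lemma \ref{lem4}. The bridge is the substitution $u=z^{-1}v$ recorded in \eqref{vu} and \eqref{nsephi}. First I would unwind the definition \eqref{nsephi} at the pullback argument. Writing $t=t_n$, replacing $\tau$ by $\tau-t_n$ and $\omega$ by $\theta_{2,-t_n}\omega$, and using the group identity $\theta_{2,-(\tau-t_n)}\circ\theta_{2,-t_n}=\theta_{2,-\tau}$ together with $t_n+(\tau-t_n)=\tau$, I obtain
\[
\Phi(t_n, \tau-t_n, \theta_{2,-t_n}\omega, u_{0,n})
= \frac{1}{z(\tau,\theta_{2,-\tau}\omega)}\, v(\tau, \tau-t_n, \theta_{2,-\tau}\omega, v_{0,n}),
\]
where $v_{0,n}=z(\tau-t_n,\theta_{2,-\tau}\omega)\,u_{0,n}$. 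The crucial observation is that the prefactor $z^{-1}(\tau,\theta_{2,-\tau}\omega)=e^{-\alpha\omega(-\tau)}$ is a single scalar depending only on the fixed $\tau$ and $\omega$, and in particular is independent of $n$. Hence strong convergence of the $\Phi$-sequence in $H$ is equivalent to strong convergence of the corresponding $v$-sequence.

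Next I would verify that the transformed initial data lie in a tempered family, so that Lemma \ref{lem4} applies. Exactly as in the proof of Lemma \ref{lematt1}, I associate to $D\in\cald$ the family $\tilde D$ of closed balls of radius $|z(\tau,\theta_{2,-\tau}\omega)|\,\|D(\tau,\omega)\|$ given by \eqref{plematt1_1}-\eqref{plematt1_2}; the temperedness estimate established there shows $\tilde D\in\cald$. Since $u_{0,n}\in D(\tau-t_n,\theta_{2,-t_n}\omega)$, the defining relation for $v_{0,n}$ yields
\[
\|v_{0,n}\|\le |z(\tau-t_n,\theta_{2,-\tau}\omega)|\,\|D(\tau-t_n,\theta_{2,-t_n}\omega)\|,
\]
so that $v_{0,n}\in\tilde D(\tau-t_n,\theta_{2,-t_n}\omega)$.

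I would then invoke Lemma \ref{lem4} with $D$ replaced by $\tilde D$, obtaining a subsequence (not relabeled) along which $v(\tau,\tau-t_n,\theta_{2,-\tau}\omega,v_{0,n})$ converges strongly in $H$ to some limit $\tilv$. Multiplying by the frozen scalar $z^{-1}(\tau,\theta_{2,-\tau}\omega)$ preserves strong convergence, so the corresponding subsequence of $\Phi(t_n,\tau-t_n,\theta_{2,-t_n}\omega,u_{0,n})$ converges in $H$ to $z^{-1}(\tau,\theta_{2,-\tau}\omega)\,\tilv$, which is precisely the assertion.

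The main point here is not an analytic obstacle but the careful bookkeeping of the two parametric flows: one must confirm that unwinding \eqref{nsephi} collapses the random parameter to the single value $\theta_{2,-\tau}\omega$ and freezes the conversion factor at $z(\tau,\theta_{2,-\tau}\omega)$, independent of $n$. Once this reduction is secured, all the genuine compactness — the weak limits extracted by a diagonal argument and the energy-equation identity upgrading weak to strong convergence — has already been supplied by Lemma \ref{lem4}, and only the temperedness transfer of Lemma \ref{lematt1} is needed to feed the correct family $\tilde D$ into it.
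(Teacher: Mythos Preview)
Your proposal is correct and follows essentially the same approach as the paper's own proof: both transfer the initial data $u_{0,n}$ to $v_{0,n}=z(\tau-t_n,\theta_{2,-\tau}\omega)u_{0,n}$, invoke the temperedness of $\tilde D$ from the proof of Lemma \ref{lematt1} to place $v_{0,n}\in\tilde D(\tau-t_n,\theta_{2,-t_n}\omega)$, apply Lemma \ref{lem4} to extract a convergent subsequence for the $v$-problem, and then multiply by the fixed scalar $z^{-1}(\tau,\theta_{2,-\tau}\omega)$ to conclude. Your explicit unwinding of \eqref{nsephi} via the group identity is a bit more detailed than the paper's terse appeal to \eqref{vu}, but the logic is identical.
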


\begin{proof}
Since $D \in \cald$  and
$u_{0,n}  \in D(\tau -t_n, \theta_{2, -t_n} \omega )$, by 
the proof of Lemma  \ref{lematt1} we find   that
for   each $n \in \N$, 
$v_{0,n} = z(\tau - t_n, \theta_{2, -\tau} \omega ) u_{0, n}$
$\in {\tilde{D}} (\tau -t_n, \theta_{2, -t_n} \omega)$, 
where 
$\tilde{D} \in \cald$ is  the family defined by
\eqref{plematt1_2}. 
Then it follows   from Lemma \ref{lem4}   that
the sequence
$v(\tau, \tau -t_n, \theta_{2, -\tau} \omega, v_{0,n} )$
of solutions   of problem \eqref{v1}-\eqref{v4} 
has a convergent subsequence in $H$.
By \eqref{vu}    we have
$$
u(\tau, \tau -t_n, \theta_{2, -\tau} \omega, u_{0,n} )
=
{\frac 1{z(\tau, \theta_{2, -\tau} \omega )}}
v(\tau, \tau -t_n, \theta_{2, -\tau} \omega, v_{0,n} ),$$
and hence  the sequence 
$u(\tau, \tau -t_n, \theta_{2, -\tau} \omega, u_{0,n} )$
has a convergent subsequence in $H$.   This implies    
$\Phi(t_n, \tau -t_n,  \theta_{2, -t_n} \omega,   u_{0,n}  ) $   has a
   convergent
subsequence in $H $.
\end{proof}

  We  are now   in a position to  present
  the main result of the paper, that is,   the existence of 
  tempered pullback attractors for
  the stochastic  Navier-Stokes equations.  
  
  \begin{thm}
\label{thmnse1}
 Suppose  \eqref{poincare}  and   \eqref{fcond2}  hold.
 Then the continuous cocycle $\Phi$ associated with
 problem \eqref{nse1}-\eqref{nse4}  
   has a unique $\cald$-pullback attractor $\cala
   =\{\cala(\tau, \omega):
      \tau \in \R, \ \omega \in \Omega \} \in \cald$
 in $H$.  Moreover,
 for each $\tau  \in \R$   and
$\omega \in \Omega$,
\be\label{thm1_1}
\cala (\tau, \omega)
=\Omega(K, \tau, \omega)
=\bigcup_{B \in \cald} \Omega(B, \tau, \omega)
\ee
\be\label{thm1_2}
 =\{\psi(0, \tau, \omega): \psi \mbox{ is any  }  \cald {\rm -}
 \mbox{complete orbit of } \Phi\} .
\ee
\end{thm}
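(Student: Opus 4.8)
The plan is to deduce Theorem~\ref{thmnse1} directly from the abstract existence-and-uniqueness criterion of Proposition~\ref{att}, with the two parametric spaces taken to be $(\R,\{\thonet\}_{t\in\R})$ and $(\Omega,\calf,P,\{\thtwot\}_{t\in\R})$; under this identification the generic parameter pair $(\omega_1,\omega_2)$ of Section~2 becomes $(\tau,\omega)$. Accordingly, the whole argument reduces to verifying that the hypotheses of Proposition~\ref{att} hold for the cocycle $\Phi$ constructed in Section~3 and the collection $\cald$ of tempered families defined in \eqref{dnse}.

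First I would record that $\Phi$ is a continuous cocycle on $H$ over the two parametric dynamical systems; this was established in Section~3 from the cocycle identity \eqref{pcon3} together with the measurability and the continuous dependence on initial data of the solution $v$ of \eqref{v1}--\eqref{v4}. Next, the collection $\cald$ is neighborhood closed, as already observed immediately after \eqref{dnse}, so the structural requirement on $\cald$ in Proposition~\ref{att} is met. It then remains to supply the two dynamical ingredients: by Lemma~\ref{lematt2} the cocycle $\Phi$ possesses a closed $\cald$-pullback absorbing set $K\in\cald$ that is measurable with respect to the $P$-completion of $\calf$, and by Lemma~\ref{lematt3} the cocycle $\Phi$ is $\cald$-pullback asymptotically compact in $H$; both facts are valid under the standing hypotheses \eqref{poincare} and \eqref{fcond2}, which are exactly the assumptions of the theorem.

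With these facts in hand, Proposition~\ref{att} immediately yields the existence of a unique $\cald$-pullback attractor $\cala\in\cald$ for $\Phi$ in $H$, together with the representation formulas \eqref{attform1}--\eqref{attform2}; rewriting those formulas with $(\omega_1,\omega_2)=(\tau,\omega)$ produces precisely \eqref{thm1_1} and \eqref{thm1_2}, which completes the proof. I do not anticipate any genuine obstacle at this final stage, since all of the analytic difficulty---in particular the loss of compactness of the Sobolev embedding on the unbounded domain $Q$, which forced the use of Ball's energy-equation method in Lemma~\ref{lem4} and hence in Lemma~\ref{lematt3}---has already been absorbed into the preceding lemmas. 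The only point that deserves explicit mention is the bookkeeping that the absorbing set $K$ itself belongs to $\cald$, i.e.\ that the family in \eqref{plematt2_1} is tempered; but this is exactly the temperedness estimate carried out at the end of Lemma~\ref{lematt2}, so no further computation is needed here.
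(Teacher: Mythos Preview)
Your proposal is correct and follows essentially the same approach as the paper: invoke Lemma~\ref{lematt2} for the closed measurable $\cald$-pullback absorbing set, Lemma~\ref{lematt3} for $\cald$-pullback asymptotic compactness, and then apply Proposition~\ref{att}. The paper's proof is terser and does not spell out the neighborhood closedness of $\cald$ or the cocycle property of $\Phi$ explicitly, but the logical skeleton is identical.
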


\begin{proof}
By Lemma \ref{lematt2} we know   that
$\Phi$ has a closed measurable $\cald$-pullback absorbing set
in $H$.  On the other   hand,  by Lemma \ref{lematt3} we know that
  $\Phi$ is   $\cald$-pullback 
asymptotically  compact.
Then it follows    from Proposition \ref{att}
 that  $\Phi$   has a unique
 $\cald$-pullback attractor 
 $\cala$ in $H$
 and the structure of 
 $\cala$  is given by
 \eqref{thm1_1}-\eqref{thm1_2}.
\end{proof}

    We now  discuss    the existence of periodic 
    pullback   attractors   for problem \eqref{nse1}-\eqref{nse4}.
    Suppose   $f: \R \to V^*$
  is  a  periodic   function  
with period $T>0$.
If, in addition,  $f \in L^2_{loc} (\R, V^*)$,   then
  one   can verify    that $f$  satisfies  
  \eqref{fcond2}
  for  any $\delta>0$.
  In this case, 
for every ${\tilde{u}} \in H$,
 $t \ge 0$, $\tau \in \R$ and $\omega \in \Omega$,
   we  have    that 
$$
\Phi (t, \tau +T, \omega, {\tilde{u}} )
= u(t+ \tau +T, \tau +T,  \theta_{2, -\tau -T} \omega, {\tilde{u}})
=u(t +\tau, \tau, \theta_{2, -\tau} \omega, {\tilde{u}} ).
= \Phi (t, \tau,  \omega, {\tilde{u}} ). $$
By  Definition \ref{ds1}, we find   that
  $\Phi$ is  periodic with period  $T$.
Let $D \in \cald$ and $D_T$  be the $T$-translation
of $D$.  Then for every
$c>0$,  $s  \in \R$   and $\omega \in \Omega$,
\be\label{tranrde1}
\lim_{r \to \infty}
e^{-cr } \| D(s -r , \theta_{2, -r} \omega )\|^2 =0.
\ee
In particular, for $s  = \tau +T$   with $\tau \in \R$,  we get
from \eqref{tranrde1}  that 
\be\label{tranrde2}
\lim_{r \to  \infty}
e^{-cr } \| D_T(\tau -r, \theta_{2, -r} \omega )\|^2  
=
\lim_{r \to \infty}
e^{-cr } \| D(\tau +T-r, \theta_{2, -r} \omega )\|^2 =0.
\ee
From \eqref{tranrde2}  we see   that
$D_T \in \cald$,    and hence
$\cald$  is $T$-translation closed.
Similarly, one may check  that  
$\cald$  is also $-T$-translation closed.
Therefore 
   we find   
that  $\cald$  is $T$-translation invariant.
By   Proposition \ref{periodatt}, 
   the periodicity of the 
$\cald$-pullback attractor of 
problem \eqref{nse1}-\eqref{nse4}   follows.

  \begin{thm}
\label{thmnse2}
Let $f: \R \to V^*$  be    periodic  
with period $T>0$ and 
   $ f \in  L^2 ((0,T), V^*)$.
   If    \eqref{poincare} holds,  
 then the continuous cocycle $\Phi$ associated with
 problem \eqref{nse1}-\eqref{nse4}  
   has a unique   $\cald$-pullback attractor $\cala \in \cald$
 in $H$, which is periodic with period $T$.
\end{thm}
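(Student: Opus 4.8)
The plan is to verify that $\Phi$ and the collection $\cald$ satisfy every hypothesis of Proposition \ref{periodatt} and then read off the conclusion. The three substantive points are: (a) that the periodic $f$ satisfies \eqref{fcond2}, so that Lemmas \ref{lematt2} and \ref{lematt3} apply; (b) that $\Phi$ is a periodic cocycle of period $T$ in the sense of Definition \ref{ds1}; and (c) that $\cald$ is $T$-translation invariant. Together with the fact that $\cald$ is neighborhood closed (noted after \eqref{dnse}), these are exactly the inputs of Proposition \ref{periodatt}.

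First I would check \eqref{fcond2}. Since $f$ has period $T$ and lies in $L^2((0,T),V^*)$, the map $s\mapsto\|f(\cdot,s+r)\|^2_{V^*}$ is $T$-periodic in $s$ with integral over one period equal to the fixed constant $M=\int_0^T\|f(\cdot,s)\|^2_{V^*}\,ds<\infty$. Fixing any $\delta\in(0,\nu\lambda)$ and decomposing $(-\infty,0]$ into the intervals $[-(k+1)T,-kT]$, a geometric sum gives $\int_{-\infty}^0 e^{\delta s}\|f(\cdot,s+r)\|^2_{V^*}\,ds\le M/(1-e^{-\delta T})$ uniformly in $r$. Multiplying by $e^{cr}$ and letting $r\to-\infty$ yields \eqref{fcond2} for every $c>0$. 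Hence Lemma \ref{lematt2} provides a closed measurable $\cald$-pullback absorbing set $K\in\cald$, and Lemma \ref{lematt3} gives the $\cald$-pullback asymptotic compactness of $\Phi$.

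Next I would establish that $\Phi(t,\tau+T,\omega,\cdot)=\Phi(t,\tau,\omega,\cdot)$. By the representation \eqref{nsephi} this reduces to the identity $u(t+\tau+T,\tau+T,\theta_{2,-\tau-T}\omega,\tilde u)=u(t+\tau,\tau,\theta_{2,-\tau}\omega,\tilde u)$. Writing $\bar\omega=\theta_{2,-\tau-T}\omega$ and $\omega^*=\theta_{2,-\tau}\omega=\theta_{2,T}\bar\omega$, I would pass to the $v$-variable via \eqref{vu} and set $\hat v(s)=v(s+T,\tau+T,\bar\omega,z(\tau+T,\bar\omega)\tilde u)$. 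From $z(s,\omega)=e^{-\alpha\omega(s)}$ one has $z(s+T,\bar\omega)=z(T,\bar\omega)\,z(s,\omega^*)$, and together with $f(\cdot,s+T)=f(\cdot,s)$ this shows that $\hat v$ and $\kappa\,v(\cdot,\tau,\omega^*,z(\tau,\omega^*)\tilde u)$, with $\kappa=z(T,\bar\omega)$, solve one and the same problem \eqref{v1}--\eqref{v4}; by uniqueness they coincide. Converting back to $u$, the prefactor $z(t+\tau+T,\bar\omega)^{-1}=\kappa^{-1}z(t+\tau,\omega^*)^{-1}$ cancels the surviving $\kappa$, which gives the claimed identity and hence the periodicity of $\Phi$.

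Finally I would note that $\cald$ is $T$-translation invariant: the tempering condition \eqref{attdom1} only constrains the behaviour of $\|D(\tau-r,\theta_{2,-r}\omega)\|$ as $r\to\infty$, so replacing $\tau$ by $\tau\pm T$ preserves membership in $\cald$ and $\cald_T=\cald$. With a continuous periodic cocycle, a neighborhood closed and $T$-translation invariant $\cald$, the asymptotic compactness of Lemma \ref{lematt3}, and the absorbing set of Lemma \ref{lematt2} all in hand, Proposition \ref{periodatt} yields a unique periodic $\cald$-pullback attractor of period $T$. I expect the scaling relation $\hat v=\kappa\,v(\cdot,\tau,\omega^*,\cdot)$ behind the periodicity of $\Phi$ to be the only genuinely delicate step, since it is where the time-shift of the forcing, the noise-shift $\theta_2$, and the multiplicative transformation \eqref{vu} must be reconciled; the remaining points are routine estimates or direct appeals to the preceding lemmas.
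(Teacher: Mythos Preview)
Your proposal is correct and follows essentially the same route as the paper: verify that the periodic $f$ satisfies \eqref{fcond2}, check that $\Phi$ is $T$-periodic in the sense of Definition \ref{ds1}, observe that $\cald$ is $T$-translation invariant, and apply Proposition \ref{periodatt} together with Lemmas \ref{lematt2} and \ref{lematt3}. Your treatment is in fact more explicit than the paper's, which simply asserts the chain $\Phi(t,\tau+T,\omega,\tilde u)=u(t+\tau+T,\tau+T,\theta_{2,-\tau-T}\omega,\tilde u)=u(t+\tau,\tau,\theta_{2,-\tau}\omega,\tilde u)=\Phi(t,\tau,\omega,\tilde u)$ and states that \eqref{fcond2} can be verified; your $\kappa$-scaling argument via the $v$-equation and your geometric-series bound for \eqref{fcond2} supply the details behind those assertions.
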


In the present
 paper,   we  have  discussed   the  pullback
attractors of the two-dimensional 
stochastic Navier-Stokes
equations with  non-autonomous deterministic 
force.  It  is
also interesting to 
consider the same problem for 
  the three-dimensional 
Navier-Stokes
equations,  where the uniqueness of solutions does not hold
anymore. 
In this case,   the author believes  that
the idea of multivalued  dynamical systems
developed in \cite{car6} can be 
extended to study the pullback attractors
of  the three-dimensional equations
with non-autonomous deterministic force.
The author will pursue this  line of   research
in the future.

\end{document}